\documentclass[11pt]{amsart}

\usepackage{amssymb,amsthm,amsmath}
\usepackage{enumerate}

\usepackage{tikz}% do grafów
\usepackage{tikz-network}%do strzałek na krawędziach

\usetikzlibrary{shapes.geometric}
\usetikzlibrary{calc}

\usepackage[latin2,utf8]{inputenc}
\usepackage{xcolor}
\usepackage[a4paper,margin=1in,footskip=0.25in]{geometry}

%\textheight 28cm
%\textwidth 16cm
%\topmargin -100pt
%\hoffset 0,7cm
%\oddsidemargin -0.5cm
%https://www.overleaf.com/project/60707dfa0b3c60a33871a6dc

\newcommand{\R}{\mathbb{R}}
\renewcommand{\div}{{\rm div}\,}

\newcommand{\vx}{\mathbf{x}}
\newcommand{\vv}{\mathbf{v}}
\newcommand{\dd}{{\rm d}}
\newcommand{\intr}{\int_{\R^{2d}}}

\newcommand{\N}{{\mathcal N}}

\newtheorem{prop}{Proposition}
\newtheorem{lem}{Lemma}
\newtheorem{rem}{Remark}
\newtheorem{defi}{Definition}
\newtheorem{theo}{Theorem}
\newtheorem{example}{Example}

\title[]{A fuzzy $q$-closest alignment model}
%Fibered gradient flows and applications to the 1-D Cucker--Smale model with weakly singular couplings

%\author{Javier Morales}

\author{Piotr B. Mucha \& Jan Peszek}

\address{Institute of Applied Mathematics and Mechanics, University of Warsaw, ul. Banacha 2, 02-097 Warszawa, Poland}
\email{p.mucha@mimuw.edu.pl, j.peszek@mimuw.edu.pl}

\begin{document}
%\thispagestyle{empty}
% \vskip2mm

\date{\today}

\keywords{Cucker-Smale model, Cucker-Dong model, Alignment models, $q$-closest neighbors, Mean-field limit, Measure-valued solutions}
\subjclass[2020]{35B40, 35D30, 35L81, 35Q70, 35Q83} 

\thanks{\textbf{Acknowledgement:} The paper has been partly supported by the Polish National Science Centre’s Grant No. 2018/30/M/ST1/00340 (HARMONIA). JP's work has been additionally supported by the Polish National Science Centre's Grant No. 2018/31/D/ST1/02313 (SONATA)}

\begin{abstract}
    The paper examines the problems related to the well-posedness of the Cucker-Smale model with communication restricted to the $q$-closest neighbors, known also as the Cucker-Dong model. With agents oscillating on the boundary of different clusters, the system becomes difficult to precisely define, which leads to further problems with kinetic limits as the number of agents tends to infinity. We introduce the fuzzy $q$-closest system, which circumvents the issues with well-posedness. For such a system we prove a stability estimate for measure-valued solutions and perform the kinetic mean-field limit.
\end{abstract}
\maketitle

\section{Introduction}

The study of interacting particle systems is one of the prolific subjects of 
mathematical modelling, with a wide spectrum of applications ranging from aggregation of bacteria and flocking of birds to opinion dynamics, distribution of goods and traffic flows. Our main interest is the Cucker-Smale (CS) flocking model \cite{CS-07}, which describes the dynamics of an ensemble of autonomous self-propelled agents with a tendency to flock. The inter-particle interactions in the CS model are all-to-all and symmetric, akin to many forces typical for classical physics. However, recently it has been recognized that asymetric interactions with finite range are fundamental from the point of view of applications, for instance, in systems related to human and animal behavior. Among CS-type models with non-standard interactions is the so called $q$-closest  model \cite{CD-16} also known as the Cucker-Dong (CD) model, which describes the motion of $N$ particles following the ODE system

\medskip
\begin{equation}\label{qclosed}
\begin{cases}
\displaystyle   \dot x_i=v_i \\[7pt]
\displaystyle  \dot v_i = -\frac1q \sum_{j\in {\mathcal N}_i} (v_i-v_j)
 \end{cases}
\end{equation}

\medskip
\noindent
where $(x_i(t),v_i(t))$ denotes the position and velocity of $i$th particle at the time $t\geq 0$. Set ${\mathcal N}_i =  \mathcal{N}_i(t)$ consists of the indices of $q$-closest neighbors of the $i$th agent at the time $t$. Throughout the paper we identify the indices in $\mathcal{N}_i$ with their corresponding particles.
%The crucial feature of this class of 
%models is that communication, or rather the impact of the communication is not related to the distance between particles.
%From one hand it makes the look of the system easier, but on the other hand it makes serious mathematical obstacles.
%
%The crucial difference between system \eqref{qclosed} and a typical linear consensus situation is that the sum is taken over the (time-dependent) set ${\mathcal N}_i = {\mathcal N}_i(t)$ of $q$-closest neighbors of the $i$th agent at the time $t$. 
%

 Our goal is to tackle the issue of ill-posedness of \eqref{qclosed} related to the problems with the rigorous definition of ${\mathcal N}_i$. We introduce a fuzzy 
 q-closest model which is well-posed. Moreover we propose a kinetic variant of the said model and prove a stability estimate for its measure-valued solutions in the $2$-Wasserstein metric. Finally, we use the stability estimate to derive solutions to the kinetic equation as a mean-field limit of the ODE system. We end our considerations with propositions of further developments related to the mean-field limit and asymptotics for the original CD model. The latter point is left unanswered, we have conjectures for large time behavior, but still without the complete proof for $q\leq \frac{N}{2}$.

\medskip

The main feature of system (\ref{qclosed}) is its combinatorical character.
The interactions depend on the inter-particle distance only as far as definition of the neighbor sets $\N_i$ is concerned. This is different to the typical CS and CD setting, where the so-called communication weight is involved; basically each summant on the right-hand side of the second equation of \eqref{qclosed} is multiplied by a function of the inter-particle distance $\psi(|x_i-x_j|)$. The main purpose of the communication weight $\psi$ is to ensure that the interactions decay with the distance, i.e. $\psi(|x_i-x_j|)\to 0$ as $|x_i-x_j|\to+\infty$. From the perspective of the CS model, one can differentiate between two types of communication: regular (say bounded and Lipschitz continuous) and singular (e.g. $\psi(r)=r^{-\alpha}$, $\alpha>0$). The case of regular communication weight has been thoroughly investigated over past years leading to results in asymptotics and large-time behavior \cite{HL-09, PKH-10, PGE-09}, influence of additional forces \cite{BH-17, BCC-11, HHK-10} and control \cite{AKMO-20, BBCK-18, CKPP-19}. Other directions include stability and mean-field limit \cite{HL-09, HT-08}. The case of the singular communication weight is more delicate resulting in interesting behaviors such as sticking of the trajectories of the particles \cite{P-14}, or for $\alpha\geq 1$ complete collision-avoidance \cite{CCMP-17, M-18, YYC-20}. The issue of well-posedness is also delicate; the weakly singular case with $\alpha\in(0,1)$ has beed studied in \cite{P-15} followed by \cite{MP-18} and more recently in \cite{CZ-21} and \cite{PP-22-arxiv}, while the strongly singular case remains completely open on the kinetic level (well posedness for the ODE was proved in \cite{CCMP-17}). For more information on the CS models and its variants for both regular and singular communication weight we recommend the surveys \cite{CHL-17, MMPZ-19}.

Here, we assume that the communication weight is simply $\psi\equiv 1$, which together with the influence restricted to the neighbor sets introduces an interesting combinatorical challenge. From the perspective of well-posedness regular communication does not introduce any more difficulty than $\psi\equiv 1$, albeit computations would be somewhat more complicated. Singular communication weights, naturally introduce problems, which are mostly unrelated to the essence of the CD model -- the rigorous treatment of the neighbor sets. We decided to focus solely on the problem of well-posedness related to the neighbor sets.

Interestingly, without the decay introduced by the communication weight, the interaction between the particles are more chaotic, effectively switching between $0$ (off) and $1$ (on) depending on how the particles are situated in relation to each other. Such a perspective is uncommon for the standard analysis where almost all quantities are depending on the distance, they decrease and finally vanish for large distances. We elaborate on this issue at the end of the paper in Section \ref{sec:conc}.

%And perhaps, here is the explanation why the question of the asymptotic for (\ref{qclosed}) is so unclear to us. Maybe the change our toolbox, or rather habits, gives  easily the answer. Note that our proposed mollification of the system does not change the general properties, still the distance in large scale does not play any essential role. 

%\textcolor{blue}{
%It is worthwile to copare the $q$-closest alignment model \eqref{qclosed} with the Cucker-Dong $q$-closest variant of the celebrated Cucker-Smale flocking model. Aside from the precise definition of $\N_i$, which we will discuss later, the only difference is the fact that system \eqref{qclosed} lacks the classical Cucker-Smale communication weight, which dissipates the long-range interactions between the agents. On the first glance the removal of the communication weight may seems like a simplification, but in practice bounded and Lipschitz kernels do not introduce any problems related to well-posedness (which originate entirely in the neighbor set $\N_i$). As a matter of fact the lack of dissipation provided by the communication weight makes the analysis of large-time behavior more difficult. No matter how far away any two particles are relative to each other their interaction is never automatically negligible -- they can still be ''the closest''.  
%}

\medskip
\subsection{Critical issue of the rigorous definition of ${\mathcal N}_i$} Let us discuss the neighbor set $\N_i$. Its mathematically rigorous definition is not straightforward. The situation is clear if no two agents are situated at the same distance from $x_i$. Then the definition of ${\mathcal N}_i$ can be stated as follows:

\medskip
$j\in {\mathcal N}_i$ if $i\neq j$ and 
\begin{equation}\label{Ni}
    \sharp \{ x_k: {\rm dist}(x_i,x_k) < {\rm dist}(x_i,x_j)\} <q.
\end{equation}

\medskip
\noindent
Here $\sharp  {\mathcal N}_i$ is the cardinality of  $ {\mathcal N}_i$. Then of course we find that $\sharp {\mathcal N}_i=q$. 
However the general case is more complex and the above definition does not fit to all cases. Problems occur when exactly $q_0<q$ particles satisfy the sharp equation in \eqref{Ni} and there are $q_1> q-q_0$ particles at the next smallest admissible distance from $x_i$ (with the same distance from $x_i$). Thus, in order to have  $\sharp {\mathcal N}_i=q$, we need to somehow choose $q-q_0$ particles from $q_1$ particles. This is impossible without additional rules. There are a couple of ways to circumvent the above issue.

\smallskip 
\begin{itemize}
    \item Cucker and Dong \cite{CD-16} prescribe an arbitrary order to the particles and define the set $\mathcal{N}_i$ by (\ref{Ni}) provided that $\sharp \mathcal{N}_i=q$ and if it $\sharp \mathcal{N}_i> q$ the definition takes $q-q_0$ particles with the smallest indices from the the group of $q_1$ particles with the same distance from $x_i$.
    Then  $\sharp {\mathcal N}_i =q$ and $\N_i$ is well defined.
    \item Another option would be to simply take all $q_1$ particles which are at the same distance from $x_i$ and accept that at times $\sharp {\mathcal N}_i>q$. Then replace the factor $\frac{1}{q}$ in \eqref{qclosed} with $\frac{1}{\sharp {\mathcal N}_i}$, which then is ''usually'' (but not always) equal to $q$.
\end{itemize}
\smallskip 

\noindent
Regardless of the chosen strategy we can easily show that the system is ill posed from the viewpoint of stability--uniqueness properties, see Example \ref{ex1} below.
Our goal is to modify the definition of ${\mathcal N}_i$ so that it retains its original features, while simultaneously allowing well-posedness of the system. To this end we introduce the fuzzy $q$-closest alignment system.

\medskip

Fix $\sigma>0$ and a function $\phi:\R^d \to \R$, such that $\phi \geq 0$ and 
\begin{equation}\label{phi}
{\rm supp}\, \phi \subset 
B(0,\sigma) \mbox{ \ \  and  \ \ } \int_{\R^d} \phi(x)dx  =1.
\end{equation}
The fuzzy variant of \eqref{qclosed}, supplemented by initial data, then reads

\begin{equation}\label{fuzzy}
    \begin{cases}
    \displaystyle \dot{x}_i = v_i, & (x_i(0),v_i(0))=(x_{0i},v_{0i}),\\
    \displaystyle\dot v_i =-\frac{1}{q} \sum_{j=1}^N  \int_{B(x_i,R)} \phi(z-x_j) (v_i-v_j)\, \dd z,
    \end{cases}
\end{equation}

\smallskip
\noindent
where $R=R(x_i(t))$ is the unique radius such that
\begin{align*}
    \sum_{j=1}^N\int_{B(x_i,R(x_i(t)))}\phi(z-x_j)\, \dd z = q.
\end{align*}

\smallskip
\noindent
Intuitively, the particles' perception is fuzzy -- if a neighbor is situated at the position $x_i$, it is perceived to be spread in an area around $x_i$ with radius $\sigma$. Such a phenomenon seems realistic as noise and other inadequacies related to perception and communication occur routinely in real life.  Observe that for any $0\leq q\leq N$ existence and uniqueness of $R$ follows from smoothness of $\phi$. We emphasise that this definition is valid for all real $q\in[0,N]$. Thanks to this observation system \eqref{fuzzy} can be restated in a more convenient and more general way as a kinetic equation, where the ODE concept of particles looses its meaning. First we fix any

$$\eta>0,\qquad q=\eta  N$$

\smallskip
\noindent
which serves as the proportion of agents that should be included in the neighbor sets. Then at each $t\geq 0$ let $f=f_t(x,v)$ be the probabilistic distribution of particles moving through position $x$ with velocity $v$. We define the kinetic, fuzzy $q$-closest alignment equation as follows
\begin{align}\label{mezo1}
\begin{aligned}
    \partial_t f_t + v\cdot\nabla f_t + {\rm div}_v(F[f_t]f_t) &= 0,\\
    F[f_t](x,v) &= \frac{1}{\eta}\intr  \int_{B(x,R[f_t](x))}\phi(z-y)(v-w)\, \dd z \, \dd f_t(y,w),
\end{aligned}
\end{align}

\smallskip
\noindent
where the radius $R$ is defined for any probability measure $\mu$ by 
\begin{equation}\label{defr}
     R[\mu](x) = \inf\Bigg\{r>0: \intr\int_{B(x,r)}\phi(z-y)\, \dd z\, \dd \mu(y,w)\geq \eta\Bigg\}.
\end{equation}

\smallskip
\noindent
Naturally, any solution
$$ t\mapsto (\vx(t),\vv(t)):= \Big((x_1(t),v_1(t)),...,(x_N(t),v_N(t))\Big) $$

\smallskip
\noindent
to \eqref{fuzzy} can be identified with the measure-valued atomic function
\begin{equation}\label{atom}
    t\mapsto f^N_t(x,v) = \frac{1}{N}\sum_{i=1}^N \delta_{x_i(t)}(x)\otimes\delta_{v_i(t)}(v)
\end{equation}

\smallskip
\noindent
solving \eqref{mezo1} with $R[f^N_t](x_i)=R(x_i)$, see Proposition \ref{chain} in the appendix.

\subsection{Main results}

The starting point of our analysis of system \eqref{fuzzy} is the above observation that solutions to \eqref{fuzzy} are equivalent to atomic (cf. \eqref{atom}) measure-valued (or simply distributional) solutions to \eqref{mezo1}, at least provided that \eqref{mezo1} is well-posed. Thus, we perform the analysis of well-posedness of the microscopic system \eqref{fuzzy} on the more robust level of well-posedness of \eqref{mezo1} in the sense of measure-valued solutions. The main results are expressed in the following two theorems related to well-posedness and stability of \eqref{mezo1}, respectively. We denote the space of probability measures with finite second moment over $\R^{2d}$ by ${\mathcal P}_2(\R^{2d})$.

\smallskip
\begin{theo}[Well-posedness in the kinetic case]\label{main}
Let $T>0$ and $f_0\in {\mathcal P}_2(\R^{2d})$  with 
\begin{align*}
    {\mathcal D}_x := {\rm diam}_x ({\rm supp}f_0) <\infty,\quad {\mathcal D}_v := {\rm diam}_v ({\rm supp})f_0 < \infty
\end{align*}

\smallskip
\noindent
be given. Then there exists a unique measure-valued solution to \eqref{mezo1} in the sense of Definition \ref{weaksol} (see below) issued at $f_0$. Furthermore, it is recovered as a mean-field limit of atomic solutions of the form \eqref{atom}, which follow trajectories $t\mapsto (\vx(t),\vv(t))$ solving the ODE system \eqref{fuzzy}.
\end{theo}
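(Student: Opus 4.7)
The plan is to construct the unique measure-valued solution as a $W_2$-limit of atomic solutions of the form \eqref{atom}, by combining global solvability of the ODE system \eqref{fuzzy} with the stability estimate for \eqref{mezo1} that constitutes the paper's second main theorem. In this way, existence, uniqueness, and the mean-field identification are obtained in a single construction.

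First, I would establish global existence and uniqueness for \eqref{fuzzy} on $[0,T]$. The right-hand side is affine in $\vv$ and depends on $\vx$ only through the smooth kernel $\phi$ and the implicit radius $R=R(\vx_i(t))$. Since $\phi\in C^\infty$ is nonnegative with unit mass, the function $r\mapsto \sum_j\int_{B(x_i,r)}\phi(z-x_j)\,\dd z$ is strictly increasing in $r$ on the relevant range, so the implicit function theorem makes $R$ a $C^1$ function of $\vx$. Hence the right-hand side of \eqref{fuzzy} is locally Lipschitz and a local Cauchy--Lipschitz solution exists; it is extended to $[0,T]$ by the standard Cucker--Smale-type monotonicity argument showing that the velocity diameter $\max_{i,j}|v_i-v_j|$ is non-increasing, so that the velocities stay uniformly bounded and positions grow at most linearly in $t$.

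Next, I would approximate $f_0$ in $W_2$ by atomic measures $f_0^N$ whose supports lie in a fixed compact subset of $\R^{2d}$; this is possible thanks to the boundedness of ${\rm supp}\,f_0$ and the finiteness of the second moment. Each $f_0^N$ generates, through the ODE system, an atomic weak solution $f_t^N$ of \eqref{mezo1} via the identification \eqref{atom} and Proposition \ref{chain}. The a priori bounds from the previous step keep all supports $\mathrm{supp}\, f_t^N$ inside a common compact set $K_T\subset\R^{2d}$ for $t\in[0,T]$, uniformly in $N$. Applying the stability estimate to any pair $f_t^N,f_t^M$ of such solutions yields
\begin{equation*}
W_2(f_t^N,f_t^M) \leq C(T)\,W_2(f_0^N,f_0^M),
\end{equation*}
so $\{f_t^N\}_N$ is Cauchy in $W_2$ uniformly on $[0,T]$. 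Its limit $f_t$ is then checked to solve \eqref{mezo1} in the sense of Definition \ref{weaksol} by passing to the limit in the distributional formulation; continuity of $F[\cdot]$ with respect to narrow convergence of measures with supports in $K_T$ makes this routine. Uniqueness of the limiting solution is a direct consequence of the stability estimate applied to two arbitrary candidate solutions issued from $f_0$, and the mean-field identification is built into the construction.

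The main technical obstacle --- and the precise reason for fuzzifying the $q$-closest rule --- lies in the stable dependence of $R[\mu](x)$ on its arguments. Because $\phi$ is smooth, the quantity
\begin{equation*}
G[\mu](x,r) := \intr\int_{B(x,r)}\phi(z-y)\,\dd z\,\dd\mu(y,w)
\end{equation*}
is $C^1$ in $r$, and on the relevant range of $r$ its derivative is bounded below by a positive constant depending only on $\eta$, $\phi$ and the support of $\mu$. Implicit differentiation then produces Lipschitz dependence of $R[\mu](x)$ on both $x$ and $\mu$ (the latter measured in $W_1$, hence controlled by $W_2$). Once this is secured, stability of the nonlocal force $F[\mu]$ in $\mu$ follows without the combinatorial difficulty that plagues the original CD system, and the rest of the argument assembles in the standard mean-field framework. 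Carefully tracking how the Lipschitz constants for $R[\cdot]$ and $F[\cdot]$ depend on $\mathcal{D}_x$, $\mathcal{D}_v$, $\sigma$, and $\|\phi\|_{C^1}$ is where most of the technical effort will be concentrated.
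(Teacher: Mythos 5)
Your overall architecture coincides with the paper's: global solvability of the ODE system, atomic approximation of $f_0$, the $W_2$-stability estimate to get a Cauchy sequence, passage to the limit in the weak formulation, and uniqueness from stability. That part is sound. The gap is in the step you yourself single out as the main technical obstacle: the Lipschitz dependence of $R[\mu](x)$ on $x$ and on $\mu$ via the implicit function theorem. You claim that on the relevant range of $r$ the derivative
\begin{equation*}
\partial_r G[\mu](x,r)=\intr\int_{\partial B(x,r)}\phi(z-y)\,\dd S(z)\,\dd\mu(y,w)
\end{equation*}
is bounded below by a positive constant. This is false precisely in the clustered configurations the model is designed for. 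Take $\mu$ (or an empirical measure $f^N$) consisting of a group near $x$ carrying fuzzy mass slightly below $\eta$ and a second group far away: then $G[\mu](x,\cdot)$ plateaus between the clusters, and at $r=R[\mu](x)$ the integrand $\phi*m_0\mu$ can vanish on the whole sphere $\partial B(x,R)$ (e.g.\ when $B(y,\sigma)$ is internally tangent to that sphere), so $\partial_r G=0$ there and the implicit function theorem gives nothing. Worse, the conclusion you want from it is not even true: $\mu\mapsto R[\mu](x)$ is \emph{not} Lipschitz in $W_1$ or $W_2$ --- moving an $O(\epsilon)$ amount of mass from the far cluster to the near one changes $W_1$ by $O(\epsilon)$ but changes $R[\mu](x)$ by an $O(1)$ amount. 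The same objection applies to your Cauchy--Lipschitz step for the ODE system, which rests on the same strict-monotonicity-plus-IFT reasoning.

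The paper circumvents this in two ways that your proposal is missing. First, Lipschitz continuity of $x\mapsto R[\mu](x)$ (with constant $1$, uniformly in $\mu$) is proved by a derivative-free geometric argument: $B(x',R(x'))\subset B(x,R(x')+|x-x'|)$ forces $G[\mu](x,R(x')+|x-x'|)\geq\eta$, hence $R(x)\leq R(x')+|x-x'|$ by the infimum definition (Lemma \ref{lem:R}); this is what makes the right-hand side of \eqref{fuzzy} Lipschitz and gives the ODE well-posedness. Second, the Lipschitz continuity of $\mu\mapsto F[\mu](x,v)$ in $W_2$ is established \emph{without} ever bounding $|R[f_t](x)-R[\widetilde f_t](x)|$: the troublesome annulus term $\int_{B(x,R)\setminus B(x,\widetilde R)}(\phi*m_1\widetilde f_t)$ is bounded by ${\mathcal D}_v$ times the corresponding zeroth-moment integral, which by the normalization $\int_{B(x,R)}(\phi*m_0 f_t)=\int_{B(x,\widetilde R)}(\phi*m_0\widetilde f_t)=\eta$ equals a difference of integrals of the \emph{same} ball against the two measures, and that is controlled by Kantorovich--Rubinstein. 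You would need to replace your implicit-differentiation step by arguments of this kind (or an equivalent) for the proof to close.
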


\medskip

\begin{theo}[$W_2$-stability]\label{stab}
Let $T>0$ and let $f$ and $\widetilde{f}$ be two measure-valued solutions to \eqref{mezo1} in the sense of Definition \ref{weaksol} issued at the initial data $f_0, \widetilde{f}_0\in {\mathcal P}_2(\R^{2d})$ with
\begin{align*}
    {\mathcal D}_x := {\rm diam}_x ({\rm supp}f_0) <\infty,\quad {\mathcal D}_v := {\rm diam}_v ({\rm supp})f_0 <\infty,\\
    \widetilde{{\mathcal D}}_x := {\rm diam}_x ({\rm supp}\widetilde{f}_0) <\infty,\quad \widetilde{{\mathcal D}}_v := {\rm diam}_v ({\rm supp})\widetilde{f}_0 <\infty,
\end{align*}

\smallskip
\noindent
Then the following stability estimate holds
\begin{equation}\label{stab-1}
   \sup_{t\in[0,T]} W_2(f_t,\widetilde{f}_t)\leq e^{(2c_2+c_1^2e^{2c_2 T}) t} W_2^2(f_0,\widetilde{f}_0),
\end{equation}

\smallskip
\noindent
where $W_2$ is the $2$-Wasserstein metric, while $c_1$ and $c_2$ are positive constants depending on $\phi, d, \eta$ and the diameters of supports of the solutions ${\mathcal D}_x, {\mathcal D}_v, \widetilde{{\mathcal D}}_x, \widetilde{{\mathcal D}}_v$. 
\end{theo}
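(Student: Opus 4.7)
The plan is to build on the characteristic representation provided by Theorem \ref{main}: write $f_t = (Z_t)_\# f_0$ and $\widetilde f_t = (\widetilde Z_t)_\#\widetilde f_0$, where $Z_t(x_0,v_0) = (X_t(x_0,v_0),V_t(x_0,v_0))$ is the forward flow on $\R^{2d}$ of the non-local field $(v,F[f_t](x,v))$ (and analogously for $\widetilde Z_t$). Fix an optimal coupling $\pi_0\in\Pi(f_0,\widetilde f_0)$ realising $W_2(f_0,\widetilde f_0)$ and transport it along the joint flow, $\pi_t := (Z_t,\widetilde Z_t)_\#\pi_0 \in \Pi(f_t,\widetilde f_t)$. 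Since $\pi_t$ is merely admissible,
\[
W_2^2(f_t,\widetilde f_t) \leq \Phi(t) := \int_{\R^{4d}}\bigl(|X_t-\widetilde X_t|^2 + |V_t-\widetilde V_t|^2\bigr)\,d\pi_0,
\]
and the stability estimate reduces to a Gronwall-type inequality for $\Phi$.

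Differentiating, Cauchy--Schwarz bounds the position contribution by $\Phi(t)$, while the velocity contribution equals
\[
2\int (V_t-\widetilde V_t)\cdot\bigl(F[f_t](Z_t) - F[\widetilde f_t](\widetilde Z_t)\bigr)\,d\pi_0.
\]
I would split the force difference into $(F[f_t](Z_t)-F[f_t](\widetilde Z_t))+(F[f_t](\widetilde Z_t)-F[\widetilde f_t](\widetilde Z_t))$. The first piece (same measure, different argument) is handled by Lipschitzness of the single field $F[f_t]$ in $(x,v)$, contributing an $O(\Phi(t))$ term; the second (same argument, different measures) should satisfy $\|F[f_t]-F[\widetilde f_t]\|_\infty \lesssim W_2(f_t,\widetilde f_t) \leq \sqrt{\Phi(t)}$, and after one more Cauchy--Schwarz against $V_t-\widetilde V_t$ it again contributes $O(\Phi(t))$.

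The main obstacle is proving these two Lipschitz estimates on $F$, both of which reduce to quantitative control of the radius $R[\mu](x)$ defined in \eqref{defr}. Setting
\[
H_\mu(x,r):=\int_{\R^{2d}}\int_{B(x,r)}\phi(z-y)\,\dd z\,\dd\mu(y,w),
\]
the radius is characterised implicitly by $H_\mu(x,R[\mu](x))=\eta$. The implicit function theorem then yields Lipschitzness of $R[\mu]$ in $x$ and in $\mu$ (with respect to $W_2$, via a Kantorovich--Rubinstein duality applied to the smooth integrand $\phi$) provided one can establish a uniform lower bound $\partial_r H_\mu(x,R[\mu](x))\geq \kappa>0$. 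This non-degeneracy is the delicate quantitative point: since $\phi$ is smooth and supported in $B(0,\sigma)$, it amounts to ensuring that a definite amount of $\mu$-mass sits in the annular shell around $\partial B(x,R[\mu](x))$. Here the a priori propagation of compact support guaranteed by Theorem \ref{main} (so that diameters grow at most like $e^{c_2 t}$) is essential: it bounds $R[\mu]$ uniformly from above on $[0,T]$, placing the required mass inside a controlled neighbourhood and producing $\kappa$ independent of $t\in[0,T]$. Once $R$ is Lipschitz, the Lipschitzness of $F[\mu](x,v)$ in $(x,v,\mu)$ is a routine computation yielding effective constants of the form $c_1 e^{c_2 t}$. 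Assembling the pieces, the differential inequality becomes $\Phi'(t) \leq (2c_2 + c_1^2 e^{2c_2 T})\Phi(t)$, and Gronwall with $\Phi(0)=W_2^2(f_0,\widetilde f_0)$ yields \eqref{stab-1}.
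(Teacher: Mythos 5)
Your overall architecture matches the paper's: characteristics $Z_t$, $\widetilde Z_t$ from the representation formula, the pushforward $(Z_t,\widetilde Z_t)_\#\pi_0$ of an optimal plan, the splitting of the force difference into a same-measure/different-point piece and a same-point/different-measure piece, and a Gronwall closure. The gap is in the step you yourself flag as the main obstacle. You propose to get Lipschitzness of $R[\mu](x)$ in $x$ and in $\mu$ from the implicit function theorem applied to $H_\mu(x,R)=\eta$, which requires the non-degeneracy $\partial_r H_\mu(x,R[\mu](x))\geq\kappa>0$. This quantity is the surface integral of $\phi*m_0\mu$ over $\partial B(x,R[\mu](x))$, and compact support of $\mu$ does \emph{not} bound it from below: it only bounds $R[\mu]$ from above. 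If the sphere of radius $R[\mu](x)$ passes through a region between clusters where the mollified density is tiny (or zero), $\partial_r H$ degenerates, and no uniform $\kappa$ exists. So the claimed estimate $|R[f_t](x)-R[\widetilde f_t](x)|\lesssim W_2(f_t,\widetilde f_t)$ is not available in general, and your second Lipschitz bound $\|F[f_t]-F[\widetilde f_t]\|_\infty\lesssim W_2(f_t,\widetilde f_t)$ does not follow by this route.

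The paper avoids both uses of non-degeneracy. For Lipschitzness of $R[\mu]$ in $x$ it uses a purely geometric argument: $B(x',R(x'))\subset B(x,R(x')+|x-x'|)$ together with the infimum definition of $R$ gives $|R(x)-R(x')|\leq|x-x'|$ with constant $1$, for \emph{any} positive measure (Lemma \ref{lem:R}). For the dependence of $F$ on the measure, it never estimates $R[f_t]-R[\widetilde f_t]$ at all: after splitting, the problematic term is the integral of $\phi*m_0\widetilde f_t$ over the annulus $B(x,R[f_t](x))\setminus B(x,R[\widetilde f_t](x))$, and the normalization
\begin{align*}
\eta=\int_{B(x,R[f_t](x))}(\phi*m_0f_t)\,\dd z=\int_{B(x,R[\widetilde f_t](x))}(\phi*m_0\widetilde f_t)\,\dd z
\end{align*}
converts that annulus integral into $\int_{B(x,R[f_t](x))}\bigl((\phi*m_0f_t)-(\phi*m_0\widetilde f_t)\bigr)\dd z$, which Kantorovich--Rubinstein bounds by $W_2(f_t,\widetilde f_t)$ directly. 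This is the idea your proposal is missing; without it (or some substitute), the differential inequality for $\Phi$ does not close. A minor additional point: the supports do not grow like $e^{c_2t}$; the paper's Definition \ref{weaksol} and Proposition \ref{th:odes} give ${\mathcal D}_x(t)\leq{\mathcal D}_x+t{\mathcal D}_v$ and ${\mathcal D}_v(t)\leq{\mathcal D}_v$.
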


\begin{rem}\label{remarkpromo}\rm
It is worthwhile to note that the choice of the metric in Theorem \ref{stab} is quite arbitrary. Indeed, based on the assumptions of Theorems \ref{main} and \ref{stab}, and on Definition \ref{weaksol}, we are dealing with solutions that have uniformly bounded supports (controlled by the initial data). Thus all of the associated families of measures are uniformly tight and have all moments uniformly bounded. In such a scenario all of the standard Wasserstein metrices are equivalent to the narrow topology, wherein convergence is characterised by testing the measure with all bounded-continuous functions (in fact in our case, all of the above topologies coincide with the weak$-*$ topology on the space $C_c$ of all continuous, compactly supported functions). Our choice to use the $W_2$ framework is mostly arbitrary, but it has certain advantages related to the fact that models of second-order collective dynamics have close relation to gradient-flows with respect to $W_2$. As discovered recently in \cite{PP-22-1-arxiv} and \cite{PP-22-arxiv} CS-type interactions can be equivalently described as gradient-flows with respect to the so-called fibered Wasserstein distance, which requires a weakly Riemannian structure similar to that of the  classical work \cite{O-01}, by {\sc F. Otto} which specifically requires the $W_2$ topology.
\end{rem}

The literature on the $q$ closest  model concerns mostly asymptotics, with the most prominent result obtained by none other than Cucker and Dong in \cite{CD-16}. It basically says that if $q>\frac{n}{2}$ then the ensemble is connected and flocking is likely to occur (depending on the tail of communication weight $\psi$). This result has been extended to other variants of the models such as the time-delayed CS model \cite{DHJK-20}. However the story of the CS model with non-standard and asymetric communication does not end (nor does it begin) here. One should mention the Motsch-Tadmor model \cite{MTX} that scales the communication with respect to the relative inter-particle distances, and CS model with short-range interactions \cite{J-18, MPT-19}. Another direction are communication protocols, which involve topology that depends on the distribution of the particles, such as the
topological CS model due to Shvydkoy and Tadmor \cite{ST-top-20} or the alignment model operating using the so called density induced consensus protocol \cite{MMP-20, MP-22}. In some sense many of the techniques and models ultimately circle back to the basic question of linear consensus \cite{OLF1, OLF2}, such as in the case of the CS model with switching topologies \cite{DHJK-202}.

What distinguishes our work from the literature is that, to the best of our knowledge, the issue of well-posedness for the CD model has not been solved yet. We propose a way to circumvent the main problems with well-posedness of the particle system and we follow through with a relatively simple derivation of the kinetic $q$ closest model by a mean-field argument. It is performed  by developing a stability estimate in the $2$-Wasserstein topology.

The reminder of the paper is organised as follows. In Section \ref{sec:prelim} we present the preliminaries: example of ill-posedness and lack of large-time stability for \eqref{qclosed} as well as the weak formulation for \eqref{mezo1} and the proof of well-posedness for the ODE system \eqref{fuzzy}. Section \ref{sec:main} is dedicated to the proofs of the main results and finally in Section \ref{sec:conc} we present conclusions and outlook.

\medskip
{\bf Notation.} Throughout the paper we often denote the time-dependence of a measure-valued function $f=f(t,x,v)$ as $f=f_t(x,v)$. For a Borel-measurable mapping $T$ and a probability measure $\mu$, we define the {\it pushforward measure} of $\mu$ along $T$ as $T_\#\mu(A):= \mu(T^{-1}(A))$. The crucial property of the pushforward measure is the change of variable formula, that we use throughout the paper
\begin{equation}\label{change-of-var}
    \int_{A} g \, \dd T_\#\mu = \int_{T^{-1}(A)} g\circ T \, \dd\mu
\end{equation}
assuming that the above integrals are well-defined.

\smallskip
Since we work with measures with uniformly compact supports weak$-*$ convergence is equivalent to narrow convergence (tested by bounded continuous functions) and to convergence in any $p$-Wasserstein metric ($1\leq p<+\infty$). We chose to use the $2$-Wasserstein metric $W_2$ (cf. Remark \ref{remarkpromo}). More general $W_p$ metric, $p\in[1,+\infty]$, is defined on the space of (uniformly) compactly supported probability measures as
\begin{align*}
    W_p(\mu,\sigma) = \left(\inf_{\gamma}\int_{\R^{4d}}|x-x'|^p\, \dd \gamma(x,x')\right)^\frac{1}{p},
\end{align*}

\smallskip

\noindent
where $\gamma$ are all {\it admissible transference plans} between $\mu$ and $\sigma$, i.e. all probabilistic measures on $\R^{4d}$ with $x$-marginal $\pi^x_\#\gamma$
equal to $\mu$ and $x'$-marginal $\pi^{x'}_\#\gamma$ equal to $\sigma$, or equivalently 
$$ \gamma(A\times\R^{2d})=\mu(A),\quad \gamma(\R^{2d}\times B) = \sigma(B). $$
Here we also include two relevant properties of the $W_p$ metric. First, $W_1$ is recovered as a dual to Lipschitz continuous functions, namely
\begin{equation}\label{k-r}
    \left|\int_{\R^{2d}} g(x,v) \dd (\mu - \sigma)\right|\approx [g]_{Lip}W_1(\mu,\sigma),
\end{equation} 
where $[g]_{Lip}$ denotes the Lipschitz constant of the function $g$.
This property is known as the Kantorovitch-Rubinstein duality theorem \cite[Theorem 4.1]{E-12}. Second, by the H\" older inequality
$$ \int_{\R^{4d}}|x-x'|\, \dd \gamma(x,x') \leq \sqrt{\int_{\R^{4d}}|x-x'|^2\, \dd \gamma(x,x')},$$
which ensures that
\begin{equation}\label{w12}
    W_1(\mu,\sigma)\leq W_2(\mu,\sigma). 
\end{equation} 

\smallskip
\noindent
Finally, we shall denote

$$\vx = (x_1,..,x_N)\in \R^{Nd},\quad \vv = (v_1,...,v_N)\in \R^{Nd}.$$

\bigskip

\section{Preliminaries}\label{sec:prelim}
The following section is dedicated to the presentation of various foundational information: in Section \ref{sec:ex} we present an example related to the ill-posedness and instability of \eqref{qclosed}, in Section \ref{sec:weaksol} -- weak formulation for \eqref{mezo1} and an essential, geometrical lemma related to the radius $R$ in \eqref{defr}. At the end of the section we prove the classical well-posedness of \eqref{fuzzy}.

\subsection{Ill-posedness of the $q$-closest alignment system.}\label{sec:ex}~

As stated in the introduction, system \eqref{qclosed} can be ill-posed whenever any of the neighbor sets $\N_i$ for $i\in\{1,...,N\}$ changes in time. The following example shows exactly that, together with the lack of stability.

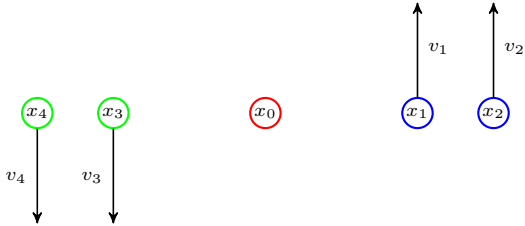
\begin{figure}[h]
\begin{center}

\begin{tikzpicture}[->,>=stealth',shorten >=1pt,auto,node distance=1.5cm,semithick]
\tikzstyle{every state}=[fill=none, draw=none, text=black]

\node (A1) at (-1,0) {\tiny{$x_{4}$}};
\node (A) at (0,0) {\tiny{$x_{3}$}};
\node (B) at (2,0) {\tiny{$x_0$}};
\node (C) at (4,0) {\tiny{$x_1$}};
\node (C1) at (5,0) {\tiny{$x_2$}};

\draw[red,thick] (2,0) circle (1/5);
\draw[blue,thick] (4,0) circle (1/5);
\draw[blue,thick] (5,0) circle (1/5);
\draw[green,thick] (0,0) circle (1/5);
\draw[green,thick] (-1,0) circle (1/5);

\path (A) edge node [left] {\tiny{$v_{3}$}} (0,-1.5);
\path (A1) edge node [left] {\tiny{$v_{4}$}} (-1,-1.5);
\path (C) edge node  [right] {\tiny{$v_1$}} (4,1.5);
\path (C1) edge node  [right] {\tiny{$v_2$}} (5,1.5);
\end{tikzpicture} \end{center} 

\caption{Initial configuration of the ensemble in {\bf Example 1.}}
		\label{fig:circle}
\end{figure}

\smallskip
\begin{example}\rm\label{ex1}
Consider the $q$-closest alignment system \eqref{qclosed} in the case of $N=5$ and $q=2$, and the following initial configuration on the plane
\begin{equation}\label{conf}
    \begin{array}{ll}
    x_0=(0,0)     &  v_0=(0,0)\\
    x_{1}=(1,0)     & v_{1}=(0,1) \\
    x_{2}=(1.1,0) & v_{2}=(0,1) \\
    x_{3}=(-1,0)     & v_{3}=(0,-1) \\
    x_{4}=(-1.1,0) & v_{4}=(0,-1).
    \end{array}
\end{equation}

\smallskip
\noindent
Observe that such initial configuration leads to the lack of uniqueness. Particles with indices $1,2$ and with indices $3,4$ constitute two stable clusters travelling with the speed $(0,1)$ and $(0,-1)$, respectively. The issue is the interaction with $0$th particle. Note that we have the following solutions:
$$
v_0^1(t)=1-e^{-1/2 t} \mbox{ \ \ and then \ \ } x_0^1(t)=t -\int_0^t e^{-1/2 s} ds.
$$

\smallskip
\noindent
and 
$$
v_0^2(t)=e^{-1/2 t}-1 \mbox{ \ \ and then \ \ } x_0^2(t)=-t +\int_0^t e^{-1/2 s} ds.
$$

\smallskip
\noindent
Both solutions start from the same configuration, and both satisfy \eqref{qclosed}. Of course this example showcases also instability of configuration \eqref{conf}. 
One can consider the $0$th agent of the form
\begin{equation*}
    x_0=(0,\epsilon), \qquad v_0=(0,0)
\end{equation*}

\smallskip
\noindent
Then we pass with $\epsilon$ to $0$, and observe that
\begin{equation*}
\left|    \lim_{\epsilon \to 0^+} x_0(1) -  \lim_{\epsilon \to 0^-} x_0(1)\right| > 1/10.
\end{equation*}

\smallskip
\noindent
Similar problems arise if we replace the strict inequalities in \eqref{Ni} with their non-strict variant. Note all the above can be easily generalized on the case $q>2$.
\end{example}

Interestingly, the same example as above but on the 2D torus showcases a situation, when no asymptotic alignment nor flocking occurs (even within separate clusters). Indeed, in Example \ref{ex1} on a torus, one can see that the particle $0$ would indefinitely switch between being influenced by the cluster $\{1,2\}$ and by $\{3,4\}$. Consequently $v_0(t)$ never stabilizes. We explore this direction more at the end of the paper in Section \ref{sec:conc}.
%Typically, the literature defines flocking as alignment and confinement
%$$ \max{i,j}|v_i(t)-v_j(t)|\to 0,\qquad \sup_{t>0}\max_{i,j}|x_i(t)-x_j(t)|<+\infty. $$
%While it is known (see [xx]) that with $q>\frac{N}{2}$ the system \eqref{qclosed} (and thus also \eqref{fuzzy}) results in a flocking behavior, Example \eqref{ex1} shows that with $q<\frac{N}{2}$ flocking does not have to occur.
%However, flocking may not be the reasonable expected large-time behavior for a $q$-closest system -- the whole point is that the particles communicate only with a fixed number of neighbors. Clustering understood as stabilisation of the neighbor sets $\N_i(t)$ for large enough $t$ is the expected scenario. But even this, weaker concept is shown to be far from inevitable. 

%\smallskip

%Interestingly, a similar example for the whole space is not easy to find. It is not clear what is the asymptotic for system \eqref{fuzzy}. On the other hand, all of the numerical simulations we performed lead to the stabilisation of velocities providing at least some insight into the possible large-time behaviours a motivating Conjecture XX in Section \ref{sec:conc} at the end of the paper.

%\begin{rem}
%We believe the lack of decay of communication realted to a communication weight plays a role in this problem in the whole space $\R^d$. Since there is no decay, it is difficult to exclude the possibility of switching influences as presented in Example \ref{ex1}. Switching influences lead to the lack of clustering.
%\end{rem}

\subsection{Weak formulation for the kinetic $q$-closest system}\label{sec:weaksol}

Next, we provide the weak formulation for the kinetic equation \eqref{mezo1}.

\begin{defi}\label{weaksol}
Let $T>0$ and $f_0\in {\mathcal P}_2(\R^{2d})$ be compactly supported in $\R^{2d}$ with
\begin{align*}
    {\mathcal D}_x:= {\rm diam}_x({\rm supp}f_0) <+\infty,\qquad  {\mathcal D}_v:= {\rm diam}_v({\rm supp}f_0) <+\infty.
\end{align*}

\smallskip
\noindent
We say that $f$ is a measure-valued solution to \eqref{mezo1} issued in the initial datum $f_0$ on the time interval $[0,T]$ if
\begin{enumerate}[(i)]
    \item The function $t\mapsto f_t$ belongs to the space $C([0,T]; ({\mathcal P}_2(\R^{2d}),W_2))$.
    
    \smallskip
    \item The function $f_t$ is compactly supported uniformly with respect to $t\in[0,T]$ with
    \begin{align*}
        {\mathcal D}_x(t):={\rm diam}_x ({\rm supp}f_t) \leq {\mathcal D}_x + t{\mathcal D}_v,\qquad {\mathcal D}_v(t):={\rm diam}_v ({\rm supp}f_t)\leq {\mathcal D}_v.
    \end{align*}
    
    \smallskip
    \item For any function $\psi\in C^1_b([0,T]\times \R^{2d})$ compactly supported in $[0,T)$ we have
    \begin{equation}\label{weaksol-1}
        \begin{split}
        \int_0^T\intr\Big( \partial_t\psi(t,x,v) + v\cdot \nabla_x\psi(t,x,v) + \nabla_v\psi(t,x,v)\cdot F[f_t](x,v)\Big)\, \dd f_t(x,v)\,\dd t \\
        = -\intr \psi(0,x,v) \,\dd f_0(x,v).
        \end{split}
    \end{equation}
\end{enumerate}

\end{defi}

\medskip

\begin{rem}\label{rem-convol}\rm
Hereinafter we shall use an alternative representation of $F[f_t]$ in \eqref{mezo1} and of the integral in the definition of $R$ in \eqref{defr}. Namely, by Fubini's theorem, we have
\begin{align*}
    F[f_t](x,v) &= \frac{1}{\eta}\int_{B(x,R[f_t](x))} (\phi * m_1f_t)(z)\, \dd z,\\
     R[f_t](x) &= \inf\Bigg\{r>0: \int_{B(x,r)}(\phi* m_0f_t)(z)\, \dd z\geq \eta\Bigg\},
\end{align*}

\smallskip
\noindent
where 
\begin{equation}\label{m1-m0}
m_1f_t(x) := \int_{\R^d} v\, \dd f_t(x,v)\quad \mbox{and}\quad m_0f_t(x) := \int_{\R^d}\, \dd f_t(x,v)
\end{equation}
are the first and second local velocity moments of $f_t$, respectively.
\end{rem}

\medskip

One of the main tools used throughout the paper is the observation that for any $\mu$ the radius $R[\mu]$ is Lipschitz continuous with constant 1. As a matter of fact this is a geometrical property completely independent of the mollification $\phi$.

%If $\mu$ is represented by a regular function than instead of () we have (), the possible difference lays in the question if $\mu(\partial B(x,R)) >0$ or just $0$, the latter case holds for regular $\mu$'s.

\begin{lem}[Lipschitz continuity of $R$]\label{lem:R}
Let $\mu$ be a positive Radon measure defined over $\R^d$, then $R[\mu]$ defined by \eqref{defr} is Lipschitz continuous with constant 1, i.e.
\begin{equation}\label{Lip-R}
    |R[\mu](x)-R[\mu](x')|\leq |x-x'|.
\end{equation}
Furthermore if $\mu$ is compactly supported then $R[\mu]$ is bounded by ${\rm diam} ({\rm supp}\mu)$, i.e.
\begin{equation}\label{R-bound}
    |R[\mu](x)|\leq {\rm diam} ({\rm supp}\mu)+\sigma \quad \mbox{for }\mu-{a.a. }\ x,
\end{equation}

\smallskip
\noindent
where we recall that $\sigma>0$ is the fixed range of the support of the mollification $\phi$ (cf. \eqref{phi}).
\end{lem}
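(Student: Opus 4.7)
The plan is to derive both estimates from the translation-equivariance of balls in $\R^d$, encoded in the elementary inclusion $B(x', r) \subseteq B(x, r + |x-x'|)$. I introduce the auxiliary function
$$\Phi_x(r) := \intr \int_{B(x,r)}\phi(z-y)\, \dd z\, \dd\mu(y,w),$$
which is non-decreasing in $r$. By the definition of the infimum in \eqref{defr} together with monotonicity, $\Phi_x(r) \geq \eta$ for every $r > R[\mu](x)$.

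For the Lipschitz bound \eqref{Lip-R}, I fix $x, x' \in \R^d$ and pick any $r > R[\mu](x')$. Combining the ball inclusion with the non-negativity of $\phi$ yields $\Phi_x(r + |x-x'|) \geq \Phi_{x'}(r) \geq \eta$, and hence $R[\mu](x) \leq r + |x - x'|$ by definition of the infimum. Letting $r \downarrow R[\mu](x')$ and swapping the roles of $x$ and $x'$ then yields \eqref{Lip-R}.

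For the bound \eqref{R-bound}, I fix $x \in {\rm supp}\,\mu$, set $D := {\rm diam}({\rm supp}\,\mu)$, and verify that $\Phi_x(D + \sigma) \geq \eta$. The key observation is that for every $y \in {\rm supp}\,\mu$ the triangle inequality gives $B(y,\sigma) \subseteq B(x, D + \sigma)$, so by the support and normalization conditions \eqref{phi},
$$\int_{B(x, D + \sigma)} \phi(z-y)\, \dd z \geq \int_{B(y,\sigma)} \phi(z-y)\, \dd z = 1.$$
Integrating this against $\mu$ over ${\rm supp}\,\mu$ and using $\mu(\R^{2d}) \geq \eta$ (automatic when $\mu$ is a probability measure with $\eta \leq 1$) yields the claim.

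The argument is, in essence, two applications of the triangle inequality, and I do not anticipate a genuine obstacle. The only minor subtlety is that $R[\mu](\cdot)$ is defined as an infimum rather than a minimum, which I handle by systematically working with radii strictly larger than the relevant $R[\mu](\cdot)$ and passing to the limit at the very end.
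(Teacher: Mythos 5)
Your proof is correct and follows essentially the same route as the paper's: the ball inclusion $B(x',r)\subset B(x,r+|x-x'|)$ plus monotonicity for the Lipschitz bound, and the observation that a ball of radius ${\rm diam}({\rm supp}\,\mu)+\sigma$ around a support point captures the full mass of $\phi*m_0\mu$ for the bound \eqref{R-bound}. Your handling of the infimum via radii $r>R[\mu](x')$ and a limiting argument is a slightly more careful variant of the paper's step, which asserts the defining inequality directly at $r=R[\mu](x')$; this is a cosmetic rather than substantive difference.
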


\medskip

\begin{proof}
 Since $\mu$ is fixed we denote $R(x):=R[\mu](x)$. Take any $x$ and $x'$, and assume without loss of generality that $R(x')\leq R(x)$. Note that we always have
\begin{equation*}
    B(x',R(x')) \subset B(x,R(x')+|x-x'|).
\end{equation*}

\smallskip
\noindent
The above inclusion together with \eqref{defr} for $R(x')$, lead to 
\begin{align}\label{lem:R-1}
    \intr\int_{B(x,R(x')+|x-x'|)}\phi(z-y)\, \dd z\, \dd \mu(y,w)\geq \intr\int_{B(x',R(x'))}\phi(z-y)\, \dd z\, \dd \mu(y,w) \geq \eta,
\end{align}

\smallskip
\noindent
since we integrate a positive function over a larger set. But definition \eqref{defr} for $R(x)$ implies that $R(x)$ is the infimum of all $r$ from \eqref{defr}, thus by 
\eqref{lem:R-1} we deduce 
\begin{equation*}
    R(x) \leq R(x')+|x-x'|.
\end{equation*}

\smallskip
\noindent
Recalling that $R(x')\leq R(x)$ this finishes the proof of \eqref{Lip-R}. Now suppose that $\bar{R}:={\rm diam}({\rm supp}\mu)<+\infty$ and fix $x\in {\rm supp}\mu$ so that ${\rm supp}\mu\subset B(x,\bar{R})$. Using the representation from Remark \ref{rem-convol}, we have
$$ \intr\int_{B(x,\bar{R}+\sigma)}\phi(z-y)\, \dd z\, \dd \mu(y,w) = \int_{B(x,\bar{R}+\sigma)}\phi* m_0\mu(z)\, \dd z, $$

\smallskip
\noindent
which together with the fact that ${\rm supp} (\phi*m_0\mu) = {\rm supp}\phi + {\rm supp}\ m_0\mu\subset B(x,\bar{R}+\sigma)$ implies that

$$ \intr\int_{B(x,\bar{R}+\sigma)}\phi(z-y)\, \dd z\, \dd \mu(y,w) = 1>\eta $$

\smallskip
\noindent
and by \eqref{defr} inequality \eqref{R-bound} necessarily follows.

\end{proof}

\bigskip

\subsection{Well-posedness of the fuzzy $q$-closest system \eqref{fuzzy}}
In this section we prove the well-posedness of the fuzzy system \eqref{fuzzy} together with some of its basic properties. 
 
\begin{prop}\label{th:odes}
Let $(\vx_0,\vv_0) \in \R^d \times \R^d$, $q, T>0$.
Then there exists a unique global in time solution $(\vx,\vv)\in C^1([0,T])^d$ to problem \eqref{fuzzy}, such that
%\begin{equation}\label{est-odes}
%\sup_{t\in [0,T]} \max_k |v_k(t)|\leq \max_k |v_{0k}|, \qquad 
%\sup_{t\in [0,T]} \max_k |x_k(t)|\leq \max_k |v_{0k}| + T\max_k |v_{0k}|.
%\end{equation}
\begin{equation}\label{est-odes}
    \sup_{t\in[0,T]}|\vv(t)|_\infty\leq |\vv_0|_\infty,\qquad \sup_{t\in[0,T]}|\vx(t)|_\infty\leq |\vx_0|_\infty+ T|\vv_0|_\infty.
\end{equation}

\smallskip
\noindent
Furthermore for all $t\in[0,T]$, we have
\begin{equation}\label{vel-conv}
    {\rm conv}(v_1(t),...,v_N(t)) \subset {\rm conv}
    (v_{01},...,v_{0N}).
\end{equation}
Here and hereinafter ${\rm conv} A$ denotes the convex hull of $A$.
\end{prop}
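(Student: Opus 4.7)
The plan is to apply Cauchy--Lipschitz after rewriting the velocity equation in consensus form, deduce \eqref{est-odes} and \eqref{vel-conv} from a support-function argument, and extend globally via the blow-up alternative. Setting
\[ w_{ij}(\vx) := \int_{B(x_i, R(x_i))}\phi(z-x_j)\, \dd z, \]
the defining relation for $R$ forces $\sum_j w_{ij}(\vx) = q$, so \eqref{fuzzy} takes the consensus form
\[ \dot v_i = \bar v_i - v_i, \qquad \bar v_i := \frac{1}{q}\sum_{j=1}^N w_{ij}(\vx)\, v_j,\]
with $\bar v_i$ a convex combination of $v_1,\dots,v_N$.

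For local well-posedness I would verify that the right-hand side is locally Lipschitz in $(\vx,\vv)$ and invoke the Cauchy--Lipschitz theorem. The $\vv$-dependence is linear, so the only question is regularity in $\vx$. Lemma \ref{lem:R} applied to the atomic measure $\mu^N = \frac{1}{N}\sum_j \delta_{(x_j,v_j)}$ already gives $1$-Lipschitz continuity of $x\mapsto R[\mu^N](x)$ for fixed $\mu^N$. To handle simultaneous perturbations of all $x_j$'s, I would apply the implicit function theorem to the defining equation
\[ r\mapsto \int_{B(x_i, r)}(\phi * m_0\mu^N)(z)\, \dd z = \eta,\]
whose derivative in $r$ is strictly positive at the crossing level because $\phi$ is smooth and $R(x_i)$ is selected as the infimum, which rules out a flat plateau at level $\eta$. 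This yields joint Lipschitz dependence of $R$ on $\vx$, hence of each $w_{ij}$ and of the whole right-hand side. Picard--Lindel\"of then delivers a unique $C^1$ solution on a maximal interval $[0,T^*)$.

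The bounds in \eqref{est-odes} and the convex-hull inclusion \eqref{vel-conv} would follow from a support-function argument. For any linear $\ell:\R^d\to\R$ and $c\in\R$ with $\ell(v_{0i})\leq c$ for $i=1,\dots,N$, set $M(t):=\max_i\ell(v_i(t))$. At any time $t$ and index $i^*(t)$ realizing the maximum,
\[ \frac{\dd}{\dd t}\ell(v_{i^*}) = \ell(\bar v_{i^*}) - \ell(v_{i^*}) \leq \max_j\ell(v_j) - \ell(v_{i^*}) = 0,\]
because $\ell(\bar v_{i^*})$ is a convex combination of the $\ell(v_j)$'s. A standard envelope argument for maxima of finitely many $C^1$ functions gives $M(t)\leq c$ on $[0,T^*)$. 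Intersecting over all supporting half-spaces of ${\rm conv}(v_{01},\dots,v_{0N})$ yields \eqref{vel-conv}; specializing $\ell$ to signed coordinate projections and taking the max over $i$ gives the velocity bound, and then the position bound follows by integrating $\dot x_i=v_i$. These a priori estimates confine the state to a fixed compact set on $[0,T^*)$, whence $T^*=+\infty$ by the blow-up alternative.

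The main obstacle is the Lipschitz dependence of $R$ on the full vector $\vx$, as Lemma \ref{lem:R} only treats the spatial variable for a fixed measure. The issue reduces to ruling out tangential contact of the map $r\mapsto\int_{B(x_i,r)}(\phi*m_0\mu^N)\,\dd z$ with the level $\eta$, which is automatic for a smooth mollifier thanks to the infimum choice of $R$ but is worth recording explicitly. If one prefers to avoid this regularity question, continuity of $R$ in $\vx$ combined with Peano's theorem still gives local existence, and uniqueness can then be recovered from the consensus form \emph{a posteriori} via a Gr\"onwall estimate on $\vv$ after freezing $\vx$ along two competing trial solutions.
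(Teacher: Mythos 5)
Your consensus rewriting $\dot v_i=\bar v_i-v_i$ (valid because the normalization of $R$ forces $\sum_j w_{ij}=q$) and the support-function maximum principle are correct, and they deliver \eqref{est-odes} and \eqref{vel-conv} in one stroke; this is a clean continuous-time version of the paper's argument, which instead multiplies the equation by the velocity realizing $\sup_{w\in\partial\mathcal{V}(t)}|w|$ and then proves $\mathcal{V}(t+\Delta t)\subset\mathcal{V}(t)$ via a semi-discretization in time. The global extension from the a priori compactness is also fine.

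The gap is in the local well-posedness step. Your implicit-function-theorem argument rests on the claim that the derivative
\begin{equation*}
\frac{\dd}{\dd r}\int_{B(x_i,r)}(\phi*m_0\mu^N)(z)\,\dd z=\int_{\partial B(x_i,r)}(\phi*m_0\mu^N)(z)\,\dd\sigma(z)
\end{equation*}
is strictly positive at $r=R(x_i)$, and that the infimum selection of $R$ guarantees this. It does not: the infimum only ensures that the cumulative mass is strictly below $\eta$ for $r<R(x_i)$, not that the surface integral at $r=R(x_i)$ is positive. Since $\phi$ is continuous and supported in $\overline{B(0,\sigma)}$, it vanishes on the boundary of its support; if the balls $B(x_j,\sigma)$ carrying the mass are internally tangent to $\partial B(x_i,R)$ (or simply do not meet that sphere while the level $\eta$ is attained exactly at $r=R$), the derivative vanishes at the crossing level even though $h(r)<\eta$ for all $r<R$. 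So the IFT is not applicable in such degenerate configurations, and your fallback does not repair this: Peano gives existence without uniqueness, and the proposed Gr\"onwall estimate on $\vv$ "after freezing $\vx$" would require exactly the bound $|w_{ij}(\vx)-w_{ij}(\vx')|\lesssim|\vx-\vx'|$ that you are trying to avoid. The paper circumvents the problem by never differentiating $R$: Lemma \ref{lem:R} gives $1$-Lipschitz continuity of $x\mapsto R[\mu](x)$ for fixed $\mu$, and Lemma \ref{lip} handles the dependence on the configuration through the measure (note $W_2(\mu^N(\vx),\mu^N(\vx'))\leq|\vx-\vx'|$ via the diagonal coupling), crucially using the identity $\int_{B(x,R[\mu](x))}(\phi*m_0\mu)\,\dd z=\eta$ to compare the two radii implicitly rather than estimating $|R[\mu]-R[\widetilde\mu]|$ itself. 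You should route your local Lipschitz claim through that computation instead of the IFT.
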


\medskip

\begin{proof}
Thanks to Lemma \ref{lem:R} we are ensured that the right-hand side of the system is Lipschitz continuous in $x$ and $v$, so unique solutions in the $C^1$ class exists at least locally in time.
In order to show the global existence we need to prove global bounds.

%{\it Bound on the velocity}
\medskip
The form of the right-hand side of system \eqref{fuzzy} implies immediately the bound for the velocity. 
Indeed, let $\mathcal{V}(t)={\rm conv}(v_1(t),...,v_N(t))$, then by definition

\smallskip
\begin{equation*}
    \mbox{ as \ } w_{max} \mbox{ realizes the maximum} \sup_{w \in \partial \mathcal{V}(t)} |w|,  
\end{equation*}
then 
$$
\mbox{ for all } v_k \mbox{ \ with \ } k=1,...,N \qquad
v_k \cdot w_{max} - |w_{max}|^2\leq 0
$$

\smallskip
\noindent
and equality holds only as $w_{max}=v_k$ for some $k$.
Let us multiply \eqref{fuzzy}$_2$ by $v_k=w_{max}$ obtaining
\begin{equation*}
   \frac{1}{2} \frac{d}{dt} |v_k|^2 = -\frac{1}{q} \sum_{l}  \int_{B(x_k,R(x_k))} \phi(z-x_l)(v_k - v_l)\cdot v_k \leq 0,
\end{equation*}

\smallskip
\noindent
which leads to the first part of (\ref{est-odes}), and then to the second assertion.

%Note that the rhs can be zero only in the case as all neighbours are of the same speed.

\medskip
In order to prove more precise information about the evolution of the set of velocities note that since the right-hand side is smooth enough the system can be effectively approximated by the semi-discretization  in time. So for $\Delta t >0$
we consider
\begin{equation}\label{R-2}
    v_k(t+\Delta t)= v_k(t) - {\Delta t} \sum_l \theta_l(v_k-v_l)
\end{equation}

\smallskip
\noindent
with $\theta_l = \frac{1}{q} \int_{B(x_k,R(x_k))} \phi(z-x_l)dz$. It's clear $\theta_l \geq 0$ and 
$\sum_l \theta_l=1$. So (\ref{R-2}) takes the form
\begin{equation*}
    v_k(t+\Delta t)= (1-\Delta t) v_k(t) + {\Delta t} \sum_l \theta_l v_l. 
\end{equation*}

\smallskip
\noindent
Hence $v_k(t+\Delta t)$ is a convex combination of the rest of velocities at the previous time step
(for $\Delta t <1$). So for the semi-discretization we get
\begin{equation*}
    \mathcal{V}(t) \supset \mathcal{V}(t+\Delta t).
\end{equation*}

\smallskip
\noindent
The above relation is preserved for the limit $\Delta t \to 0$. We have (\ref{vel-conv}) and the proof is done.
\end{proof}
\bigskip

\section{Proof of the main results}\label{sec:main}
The following section is dedicated to the proof of well-posedness of system \eqref{mezo1} in the class of solutions delivered by Definition \ref{weaksol} i.e. Theorem \ref{main}. We begin with the second result -- the $W_2$-Wasserstein stability estimate for solutions of \eqref{mezo1} -- Theorem \ref{stab}, which enables us to obtain weak solutions of \eqref{mezo1} as a mean-field limit of atomic solutions originating from the ODE system \eqref{fuzzy} (and grants uniqueness).

\medskip

\subsection{Representation formula}\label{sec:char}

We start with the representation formula for any solution of \eqref{mezo1} in the sense of Definition \ref{weaksol}.

\begin{prop}\label{character}
Let $f$ be a measure-valued solution to \eqref{mezo1} in the time interval $[0,T]$ issued at the compactly supported initial datum $f_0\in {\mathcal P}_2(\R^{2d})$. Then there exists a unique mapping ${\mathcal Z}_t[f](z):=({\mathcal X}_t(x,v), {\mathcal V}_t(x,v))$, $z=(x,v)\in\R^{2d}$, satisfying the ODE system
\begin{equation*}
\begin{cases}
   \displaystyle \frac{d}{dt}{\mathcal X}_t(x,v) = {\mathcal V}_t(x,v),& ({\mathcal X}_0(x,v), {\mathcal V}_0(x,v)) = (x,v),\\[7pt]
    \displaystyle\frac{d}{dt}{\mathcal V}_t(x,v) = F[f_t]({\mathcal X}_t(x,v), {\mathcal V}_t(x,v)).\\
\end{cases}
\end{equation*}

\smallskip
\noindent
Moreover $f_t = ({\mathcal Z}_t)_{\#}f_0$.
\end{prop}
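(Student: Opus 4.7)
The plan is a standard characteristics-plus-uniqueness argument: treat $F[f_t]$ as a \emph{given} time-dependent vector field on $\R^{2d}$, build its flow, and then identify $f_t$ with the pushforward $(\mathcal{Z}_t)_\# f_0$ by uniqueness for the resulting linear continuity equation.

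First I would rewrite $F[f_t]$ in a form that exposes its regularity. Using Remark \ref{rem-convol} and the identity $\int_{B(x,R[f_t](x))}(\phi * m_0 f_t)(z)\,\dd z = \eta$ (which holds for every $x$ because the integral is continuous in $r$), we obtain
\begin{equation*}
F[f_t](x,v) = v - \frac{1}{\eta}\int_{B(x,R[f_t](x))}(\phi * m_1 f_t)(z)\,\dd z.
\end{equation*}
Writing $G_t(x,v):=(v,F[f_t](x,v))$, this reduces \eqref{mezo1} to the linear continuity equation $\partial_t g_t + \mathrm{div}_{(x,v)}(G_t g_t)=0$ with prescribed coefficient $G_t$.

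Next I would verify that $G_t$ is Lipschitz in $(x,v)$ uniformly in $t\in[0,T]$ and continuous in $t$. The dependence on $v$ is affine. For the Lipschitz bound in $x$, I would combine Lemma \ref{lem:R} (which provides the $1$-Lipschitz property and the bound $R[f_t]\leq \mathcal{D}_x+T\mathcal{D}_v+\sigma$, using the uniform support estimate from Definition \ref{weaksol}(ii)) with the elementary volume estimate
\begin{equation*}
\bigl|B(x,R[f_t](x))\triangle B(x',R[f_t](x'))\bigr|\leq C\bigl(|x-x'|+|R[f_t](x)-R[f_t](x')|\bigr)\leq 2C|x-x'|,
\end{equation*}
with $C$ depending on $d$, $\sigma$ and the uniform support bound. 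Since $\phi * m_1 f_t$ is uniformly bounded, $F[f_t]$ is then Lipschitz in $x$ with a $t$-independent constant, while $t$-continuity of $G_t$ is inherited from $W_2$-continuity of $t\mapsto f_t$ via \eqref{k-r}. The classical Cauchy--Lipschitz theorem then produces the unique flow $\mathcal{Z}_t$ satisfying the stated ODE system.

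Finally I would show $f_t=(\mathcal{Z}_t)_\# f_0$ by observing that both curves solve the \emph{same} linear Cauchy problem $\partial_t g_t + \mathrm{div}_{(x,v)}(G_t g_t)=0$, $g_0=f_0$. For $f_t$ this is exactly the weak formulation \eqref{weaksol-1} with $F[f_t]$ viewed as a prescribed coefficient. For $(\mathcal{Z}_t)_\# f_0$ it follows from the change-of-variable formula \eqref{change-of-var}: differentiating $t\mapsto \int(\psi\circ\mathcal{Z}_t)\,\dd f_0$ for $\psi\in C_c^1([0,T)\times\R^{2d})$ and using the ODE for $\mathcal{Z}_t$ reproduces \eqref{weaksol-1}. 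Equality then follows from the classical uniqueness theorem for linear continuity equations with Lipschitz drift, applied via duality: for any terminal $\psi_T\in C_c^1(\R^{2d})$, propagate $\psi_T$ backward along $\mathcal{Z}_t$ to obtain an admissible test function that gives $\int\psi_T\,\dd f_T=\int\psi_T\,\dd(\mathcal{Z}_T)_\# f_0$. The step I expect to be the main obstacle is this last measure-valued uniqueness; since $G_t$ is only locally Lipschitz a priori, one needs the uniform support bound in Definition \ref{weaksol}(ii) to replace $G_t$ by a globally Lipschitz, compactly supported modification without changing either candidate solution, thereby reducing matters to the classical setting.
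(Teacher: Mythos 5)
Your proposal is correct and follows essentially the same route as the paper: establish that the velocity field $(v,F[f_t](x,v))$ is (uniformly in $t$) Lipschitz and suitably bounded on the relevant compact sets, construct the flow by Cauchy--Lipschitz, verify via the change-of-variable formula \eqref{change-of-var} that $({\mathcal Z}_t)_\#f_0$ satisfies the weak formulation, and conclude by uniqueness for the linear continuity equation. The only differences are cosmetic: you prove the $x$-Lipschitz bound through a symmetric-difference volume estimate where the paper's Lemma \ref{lip} rescales to the unit ball, and you sketch the duality argument for measure-valued uniqueness where the paper delegates it to \cite[Proposition 8.1.8]{AGS-08}.
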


\smallskip
\noindent
\begin{proof}

By Lemma \ref{lip} below, the velocity field $F[f_t]$ is uniformly Lipschitz continuous and bounded for any measure-valued solution $f$ of \eqref{mezo1}. Thus the characteristics for \eqref{mezo1} are well defined and unique. Since \eqref{mezo1} is ultimately a continuity equation, albeit in the combined variable $(x,v)\in\R^{2d}$, the representation formula in Proposition \ref{character} follows directly from the classical theory \cite[Proposition 8.1.8]{AGS-08}. Thus, the issue is the Lipschitz continuity of $F[f_t]$ which we explore in the following lemma.

That said, for the reader's convenience we shall at least provide a formal argument based on the change of variables formula \eqref{change-of-var}, which supports the claim that $f_t = ({\mathcal Z}_t)_{\#}f_0$ satisfies \eqref{mezo1} in the distributional sense. To such an end let $\psi$ be a smooth test function, compactly supported in $[0,T)\times\R^{2d}$. Using the change of variables formula \eqref{change-of-var} and the chain rule we obtain
\begin{align*}
    \int_0^T\intr\Big( \partial_t\psi(t,z) + v\cdot \nabla_x\psi(t,z) + \nabla_v\psi(t,z)\cdot F[f_t](z)\Big)\, \dd ({\mathcal Z}_t)_{\#}f_0(z)\,\dd t \\
    = \int_0^T\intr\Big( (\partial_t\psi)(t,{\mathcal Z}_t(z)) + v\cdot (\nabla_x\psi)(t,{\mathcal Z}_t(z)) + (\nabla_v\psi)(t,{\mathcal Z}_t(z))\cdot F[f_t]({\mathcal Z}_t(z))\Big)\, \dd f_0(z)\,\dd t\\
    = \int_0^T\intr\Big( \frac{d}{dt}\psi(t,{\mathcal Z}_t(z))\Big)\, \dd f_0(z)\,\dd t = -\intr \psi(0,z)\dd f_0(z),
\end{align*}
which is the weak formulation of \eqref{mezo1}, cf. \eqref{weaksol-1}.

\end{proof}

\begin{rem}\rm
Observe that Proposition \ref{character} already provides existence and uniqueness for \eqref{mezo1}. This is not a surprise -- it is a Vlasov-type equation with a Lipschitz continuous right-hand side after all. However are goal lies beyond that, we aim to show a stability estimate and approximate the solutions by the mean-field limit.
\end{rem}

\begin{lem}\label{lip}
Let $f$ and $\widetilde{f}$ be a pair of measure-valued solutions to \eqref{mezo1}. Then $F[f_t]$ is bounded as a function of $(t,x,v)$ and for a.a. $t\in[0,T)$ we have
\begin{align*}
|F[f_t](x,v)-F[f_t](x',v')|&\leq c_0(|x-x'|+ |v-v'|)\\
|F[f_t](x,v)-F[\widetilde{f}_t](x,v)|&\leq c_1 W_2(f_t,\widetilde{f}_t),
\end{align*}

\noindent
where $c_0, c_1$ are constants depending on $\phi, \eta, d, {\mathcal D}_x$ and ${\mathcal D}_v$.
\end{lem}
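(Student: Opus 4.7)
The plan is to first simplify $F[f_t]$ by exploiting the defining relation of $R[f_t]$. By Fubini and \eqref{defr} the coefficient of $v$ in $F[f_t](x,v)$ equals $\eta^{-1}\int_{B(x,R[f_t](x))}(\phi*m_0 f_t)(z)\,\dd z = 1$, so
\begin{align*}
F[f_t](x,v) = v - G[f_t](x), \qquad G[f_t](x) := \tfrac{1}{\eta}\int_{B(x,R[f_t](x))}(\phi * m_1 f_t)(z)\,\dd z.
\end{align*}
All three claimed estimates then reduce to corresponding estimates on $G$, a function of $x$ alone, and the trivial $v$-dependence drops out of the analysis.

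Boundedness is immediate from $|G[f_t](x)| \leq \eta^{-1}\|\phi * m_1 f_t\|_{L^1(\R^d)}$, uniformly controlled by a constant depending on ${\mathcal D}_v$ via Definition \ref{weaksol}(ii). For the spatial Lipschitz estimate I would write $G[f_t](x) - G[f_t](x')$ as an integral of $\phi * m_1 f_t$ over the symmetric difference $B(x,R[f_t](x)) \triangle B(x',R[f_t](x'))$; its Lebesgue measure is controlled by $C_d({\mathcal D}_x + \sigma)^{d-1}(|x-x'| + |R[f_t](x) - R[f_t](x')|) \leq C|x-x'|$ via the $1$-Lipschitz bound \eqref{Lip-R} and the diameter bound \eqref{R-bound} from Lemma \ref{lem:R}. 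Combined with the pointwise bound $\|\phi * m_1 f_t\|_\infty \leq C\|\phi\|_\infty$ this yields the constant $c_0$.

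For the $W_2$-stability, insert the hybrid quantity $\eta^{-1}\int_{B(x,R[f_t](x))}(\phi * m_1 \widetilde f_t)(z)\,\dd z$ to decompose $G[f_t](x) - G[\widetilde f_t](x) = I_1 + I_2$ into a piece $I_1$ with fixed integration domain and varying measure, and a piece $I_2$ with fixed measure and varying domain. By Fubini $I_1 = \eta^{-1}\intr h(y,w)\,\dd(f_t - \widetilde f_t)(y,w)$ with $h(y,w) := w\int_{B(x,R[f_t](x))}\phi(z-y)\,\dd z$; since $\int\phi=1$ and $|\nabla_y h| \leq \|\nabla\phi\|_{L^1}|w|$, $h$ is Lipschitz with constant depending on ${\mathcal D}_v$ and $\phi$, so Kantorovich-Rubinstein \eqref{k-r} together with \eqref{w12} give $|I_1| \leq C W_2(f_t,\widetilde f_t)$.

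The real crux is $I_2$, since a naive estimate would require Lipschitz continuity of $R$ in the underlying measure, which is not supplied by Lemma \ref{lem:R} and is a priori delicate because of possible plateaus of $r \mapsto \int_{B(x,r)}(\phi*m_0 \mu)\,\dd z$. The trick is the pointwise inequality $|\phi * m_1 \widetilde f_t(z)| \leq C\,(\phi * m_0 \widetilde f_t)(z)$, which is immediate from the velocity-support bound. Writing $\Psi_\mu(r) := \intr\int_{B(x,r)}\phi(z-y)\,\dd z\,\dd\mu(y,w)$, this upgrades the estimate to
\begin{align*}
|I_2| &\leq \tfrac{C}{\eta}\int_{B(x,R[f_t](x))\triangle B(x,R[\widetilde f_t](x))}(\phi * m_0 \widetilde f_t)(z)\,\dd z \\
&= \tfrac{C}{\eta}\bigl|\Psi_{\widetilde f_t}(R[f_t](x)) - \Psi_{\widetilde f_t}(R[\widetilde f_t](x))\bigr|,
\end{align*}
and since $\Psi_{\widetilde f_t}(R[\widetilde f_t](x)) = \eta = \Psi_{f_t}(R[f_t](x))$ by \eqref{defr}, the right-hand side equals $\eta^{-1}C|\Psi_{\widetilde f_t}(R[f_t](x)) - \Psi_{f_t}(R[f_t](x))|$, once again a Kantorovich-Rubinstein functional of $f_t - \widetilde f_t$ bounded by $CW_2(f_t,\widetilde f_t)$. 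This swap from $m_1$ to $m_0$, made possible by the uniform velocity bound, is what sidesteps any Lipschitz requirement on $R[\cdot](x)$ with respect to the measure and closes the estimate with the constant $c_1$.
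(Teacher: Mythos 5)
Your proposal is correct and follows essentially the same route as the paper's proof: the same splitting of $F[f_t](x,v)-F[\widetilde f_t](x,v)$ into a fixed-domain/varying-measure piece handled by Kantorovich--Rubinstein and a varying-domain piece, the same swap from $m_1\widetilde f_t$ to $m_0\widetilde f_t$ via the velocity bound, and the same key use of the defining identity $\Psi_{f_t}(R[f_t](x))=\eta=\Psi_{\widetilde f_t}(R[\widetilde f_t](x))$ to avoid any Lipschitz estimate of $R[\cdot](x)$ in the measure. The only cosmetic differences are your upfront normalization $F[f_t](x,v)=v-G[f_t](x)$ and the symmetric-difference-of-balls bookkeeping for the spatial Lipschitz bound, where the paper instead rescales both integrals to $B(0,1)$ and uses Lemma \ref{lem:R} in the same way.
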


\medskip
\begin{proof}
First, we note that since $f$ has a uniformly bounded support and $\phi$ is bounded, it is clear that the function $(t,x,v)\mapsto F[f_t](x,v)$ is bounded by its definition in \eqref{mezo1}.

\smallskip

We proceed with the Lipschitz continuity of $F[f_t]$. Since it is linear with respect to $v$, we have
\begin{align*}
    \nabla_v F[f_t](x,v) = \frac{1}{\eta}1_d,
\end{align*}

\smallskip
\noindent
where $1_d=(1,..,1)\in\R^d$, which concludes the proof of Lipschitz continuity with respect to $v$.
For the Lipschitz continuity with respect to $x$ we denote $R:=R[f_t](x)$ and $R':=R[f_t](x')$, and write
\begin{align*}
    F[f_t](x,v)-F[f_t](x',v) \\= \frac{1}{\eta}\intr w\left(\int_{B(x,R)}\phi(z-y)\, \dd z - \int_{B(x',R')}\phi(z-y)\, \dd z\right)\, \dd f_t(y,w)\\
    = \frac{1}{\eta}\intr w\int_{B(0,1)}\left(R^d\phi(Rz+x-y) - R'^d\phi(R'z+x'-y)\right)\, \dd z\, \dd f_t(y,w)\\
    =(R^d-R'^d)\frac{1}{\eta}\intr w\int_{B(x,R)}\phi(z-y)\, \dd z \, \dd f_t(y,w)\\
    + R'^d\frac{1}{\eta}\intr w\int_{B(0,1)}\left(\phi(Rz+x-y) -\phi(R'z+x'-y)\right)\, \dd z\, \dd f_t(y,w)\\
    =: I + II.
\end{align*}

\smallskip
\noindent
Then, by Lemma \ref{lem:R}
\begin{align*}
    |I|\leq d(R^{d-1}+R'^{d-1})D_v|R-R'|\leq 2d D_x^{d-1}D_v|x-x'|
\end{align*}

\smallskip
\noindent
and
\begin{align*}
    |II|\leq R'^d[\phi]_{{\rm Lip}}\frac{1}{\eta}\intr w\int_{B(0,1)} |z|+1 \, \dd z\, \dd f(t,y,w) |x-x'|\\
    \leq 2D_x^d[\phi]_{{\rm Lip}}\frac{1}{\eta}D_v\omega_d |x-x'|,
\end{align*}

\smallskip
\noindent
where $\omega_d$ is the volume of a $d$-dimensional unit ball. Combining the above yields the first desired inequality with $c_0 = \max\{\frac{1}{\eta}, 2dD_x^{d-1}D_v+ \frac{2}{\eta}D^d_x[\phi]_{Lip}D_v\}$.

\medskip
Next, we prove the pointwise Lipschitz continuity of $f\mapsto F[f_t](x,v)$ with respect to the $W_2$ metric. Using representation of $F[f_t]$ from Remark \ref{rem-convol}, we have
\begin{align*}
   {\mathcal L} &:= F[f_t](x,v)-F[\widetilde{f}_t](x,v)\\
   &= \frac{1}{\eta}\int_{B(x,R[f_t](x))} (\phi * m_1f_t)(z)\, \dd z - \frac{1}{\eta}\int_{B(x,R[\widetilde{f}_t](x))} (\phi * m_1\widetilde{f}_t)(z)\, \dd z,
\end{align*}
where we used notation (\ref{m1-m0}),
%
%\noindent
and thus, assuming without a loss of generality that $R[f](t,x)\geq R[\widetilde{f}_t](x)$,

\smallskip
\begin{align*}
    |{\mathcal L}| &\leq \frac{|B(x,R[f_t](x))|}{\eta}|(\phi * m_1f_t) - (\phi * m_1\widetilde{f}_t)|_\infty\\ 
    &\quad + \frac{1}{\eta}\int_{B(x,R[f_t](x))\setminus B(x,R[\widetilde{f}_t](x))} (\phi * m_1\widetilde{f}_t)(z)\, \dd z
    =: III + IV.
\end{align*}

\smallskip
\noindent
By Kantorovitch-Rubinstein duality theorem for any $z\in\R^d$
\begin{align*}
    |(\phi * m_1f_t)(z) - (\phi * m_1\widetilde{f}_t)(z)| = \left|\int_{\R^{2d}} \phi(z-y)w \, (\dd f_t(y,w) - \dd \widetilde{f}_t(y,w))\right|\\
    \stackrel{\eqref{k-r}}{\leq} L_{\phi,w} W_1(f_t,\widetilde{f}_t) \stackrel{\eqref{w12}}{\leq} L_{\phi,w} W_2(f_t,\widetilde{f}_t),
\end{align*}

\smallskip
\noindent
where $L_{\phi,w} = ([\phi]_{Lip}{\mathcal D}_v + |\phi|_\infty)$ is the Lipschitz constant of the function $$(y,w)\mapsto \phi(z-y)w.$$

\smallskip
\noindent
Thus
\begin{align*}
    III\leq \frac{\omega_d {\mathcal D}_x^d L_{\phi,w}}{\eta}W_2(f_t,\widetilde{f}_t).
\end{align*}

\smallskip
\noindent
On the other hand (recalling (\ref{m1-m0}))
\begin{align*}
    IV\leq \frac{{\mathcal D}_v}{\eta}\int_{B(x,R[f_t](x))\setminus B(x,R[\widetilde{f}_t](x))} (\phi * m_0\widetilde{f}_t)(z)\, \dd z
\end{align*}

\smallskip
\noindent
and we use the relation
\begin{align*}
    \eta = \int_{B(x,R[f_t](x))} (\phi * m_0f_t)(z)\, \dd z = \int_{B(x,R[\widetilde{f}_t](x))} (\phi * m_0\widetilde{f}_t)(z)\, \dd z
\end{align*}

\smallskip
\noindent
to infer that
\begin{align*}
    \int_{B(x,R[f_t](x))\setminus B(x,R[\widetilde{f}_t](x))} (\phi * m_0\widetilde{f}_t)(z)\, \dd z = \int_{B(x,R[f_t](x))} (\phi * m_0f_t)(z) - (\phi * m_0\widetilde{f}_t)(z) \, \dd z.
\end{align*}

\smallskip
\noindent
Using Kantorovitch-Rubinstein duality theorem in a similar manner to the estimation of $III$ we conclude that
\begin{align*}
    IV\leq \frac{\omega_d {\mathcal D}_x^d {\mathcal D}_v}{\eta} W_2(f_t,\widetilde{f}_t).
\end{align*}

\end{proof}

\medskip

\subsection{Stability estimate}\label{sec:stab}
Our next goal is to establish the stability estimate for solutions of \eqref{mezo1} and by doing so -- prove Theorem \ref{stab}. We begin with a lemma, which establishes control over characteristics from the representation formula in Proposition \ref{character}.

\begin{lem}\label{charstab}
Let $f$ and $\widetilde{f}$ be a pair of measure-valued solutions to \eqref{mezo1} issued at initial data $f_0$ and $\widetilde{f_0}$. Assume further that ${\mathcal Z}_t[f]$ and ${\mathcal Z}_t[\widetilde{f}]$ are characteristics defined in Proposition \ref{character} for $f$ and $\widetilde{f}$, respectively. Then there exists a positive constant $c_2=c_2(c_0)$, such that

\smallskip
\begin{align}
    |\mathcal{Z}_t[f](z)-\mathcal{Z}_t[f](z')|^2 &\leq e^{2c_2t}|z-z'|^2,\label{charstab-1}\\
    |\mathcal{Z}_t[f](z)- \mathcal{Z}_t[\widetilde{f}](z)|^2 &\leq c_1^2e^{2c_2 t}\int_0^t W_2^2(f_t,\widetilde{f}_t)\, \dd t.\label{charstab-2} 
\end{align}
\end{lem}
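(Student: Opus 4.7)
The plan is to prove both inequalities by Gronwall-type arguments applied to the ODE system satisfied by the characteristics $\mathcal{Z}_t[f] = (\mathcal{X}_t, \mathcal{V}_t)$ from Proposition \ref{character}. The inputs are precisely the two Lipschitz estimates of Lemma \ref{lip}: the bound with constant $c_0$ for $F[f_t]$ as a function of the phase-space variable $(x,v)$, and the bound with constant $c_1$ for $F[f_t] - F[\widetilde{f}_t]$ in terms of $W_2(f_t,\widetilde{f}_t)$. The uniform support bounds from Definition \ref{weaksol}(ii) ensure that $c_0$ and $c_1$ stay constant along the flow on $[0,T]$.

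For \eqref{charstab-1}, I set $D(t) := |\mathcal{Z}_t[f](z) - \mathcal{Z}_t[f](z')|^2 = |\mathcal{X}_t(z) - \mathcal{X}_t(z')|^2 + |\mathcal{V}_t(z) - \mathcal{V}_t(z')|^2$ and differentiate using the characteristic ODEs:
\begin{align*}
\tfrac{d}{dt} D(t) &= 2\bigl(\mathcal{X}_t(z) - \mathcal{X}_t(z')\bigr) \cdot \bigl(\mathcal{V}_t(z) - \mathcal{V}_t(z')\bigr) \\
&\quad + 2\bigl(\mathcal{V}_t(z) - \mathcal{V}_t(z')\bigr) \cdot \bigl(F[f_t](\mathcal{Z}_t(z)) - F[f_t](\mathcal{Z}_t(z'))\bigr).
\end{align*}
The first term is controlled by $D(t)$ via $2ab \leq a^2 + b^2$, and for the second term the Lipschitz bound with constant $c_0$ from Lemma \ref{lip} combined with Young's inequality again yields a contribution bounded by a multiple of $D(t)$. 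Collecting constants gives $\tfrac{d}{dt} D(t) \leq 2c_2 D(t)$ for some $c_2 = c_2(c_0)$, and Gronwall's lemma with $D(0) = |z-z'|^2$ delivers \eqref{charstab-1}.

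For \eqref{charstab-2}, I take $\widetilde{D}(t) := |\mathcal{Z}_t[f](z) - \mathcal{Z}_t[\widetilde{f}](z)|^2$, which vanishes at $t=0$ since both flows pass through $z$ at time zero. Differentiating produces the force difference $F[f_t](\mathcal{Z}_t[f](z)) - F[\widetilde{f}_t](\mathcal{Z}_t[\widetilde{f}](z))$, which I split by inserting $\pm F[f_t](\mathcal{Z}_t[\widetilde{f}](z))$ into
\begin{align*}
\bigl(F[f_t](\mathcal{Z}_t[f](z)) - F[f_t](\mathcal{Z}_t[\widetilde{f}](z))\bigr) + \bigl(F[f_t](\mathcal{Z}_t[\widetilde{f}](z)) - F[\widetilde{f}_t](\mathcal{Z}_t[\widetilde{f}](z))\bigr).
\end{align*}
The first summand is bounded by $c_0\,|\mathcal{Z}_t[f](z) - \mathcal{Z}_t[\widetilde{f}](z)|$ (phase-space Lipschitz), the second by $c_1\,W_2(f_t,\widetilde{f}_t)$ (measure Lipschitz). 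Applying Young's inequality in a way that isolates the $W_2$ term as $c_1^2 W_2^2(f_t,\widetilde{f}_t)$ leads to a differential inequality $\tfrac{d}{dt}\widetilde{D}(t) \leq 2c_2 \widetilde{D}(t) + c_1^2 W_2^2(f_t,\widetilde{f}_t)$, with the same $c_2$ as before (up to enlarging it once). The Gronwall lemma with $\widetilde{D}(0) = 0$ then gives
\begin{align*}
\widetilde{D}(t) \leq c_1^2 \int_0^t e^{2c_2(t-s)} W_2^2(f_s,\widetilde{f}_s)\,\dd s \leq c_1^2 e^{2c_2 t}\int_0^t W_2^2(f_s,\widetilde{f}_s)\,\dd s,
\end{align*}
which is \eqref{charstab-2}.

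There is no substantive obstacle in this lemma beyond careful bookkeeping of the Young inequalities so that a single constant $c_2$ governs both estimates. The genuine analytical work has already been done in Lemma \ref{lip}; the present statement is the ODE-level translation that prepares the stage for integrating against a transference plan in the proof of Theorem \ref{stab}.
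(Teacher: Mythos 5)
Your proposal is correct and follows essentially the same route as the paper: differentiate the squared distance along the characteristic ODEs, apply the two Lipschitz bounds of Lemma \ref{lip} together with Young's inequality, and conclude by Gronwall. The only (immaterial) difference is that you insert the intermediate term $\pm F[f_t](\mathcal{Z}_t[\widetilde{f}](z))$ whereas the paper inserts $\pm F[\widetilde{f}_t](\mathcal{Z}_t[f](z))$; both splittings are covered by Lemma \ref{lip} and yield the same constants.
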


\medskip

\begin{proof}
Fix $z$ and $z'$ in $\R^{2d}$. We have
\begin{align*}
    \frac{d}{dt}|\mathcal{Z}_t[f](z)-\mathcal{Z}_t[f](z')|^2 &= 2(\mathcal{X}_t[f](z)-\mathcal{X}_t[f](z'), \mathcal{V}_t[f](z)-\mathcal{V}_t[f](z'))\\
   &\quad \cdot(\mathcal{V}_t[f](z)-\mathcal{V}_t[f](z'), F[f_t]({\mathcal Z}[f](z))-F[f_t]({\mathcal Z}[f](z')))\\
    &\leq (2+2c_0^2)|\mathcal{Z}_t[f](z)-\mathcal{Z}_t[f](z')|^2,
\end{align*}

\smallskip
\noindent
where we use Lemma \ref{lip} and Young's inequality in the last line. By Gronwall's inequality we recover \eqref{charstab-1} with $c_2 = 1+c_0^2$.

\medskip

To prove inequality \eqref{charstab-2} we proceed similarly. Fix $z\in\R^{2d}$ and compute using Young's inequality
\begin{align*}
     \frac{d}{dt}|\mathcal{Z}_t[f](z)-\mathcal{Z}_t[\widetilde{f}](z)|^2 = 2(\mathcal{X}_t[f](z)-\mathcal{X}_t[\widetilde{f}](z), \mathcal{V}_t[f](z)-\mathcal{V}_t[\widetilde{f}](z))\\
    \cdot (\mathcal{V}_t[f](z)-\mathcal{V}_t[\widetilde{f}](z), F[f_t]({\mathcal Z}[f](z))-F[\widetilde{f}_t]({\mathcal Z}[\widetilde{f}](z)))\\
    \leq 2|\mathcal{Z}_t[f](z)-\mathcal{Z}_t[\widetilde{f}](z)|^2 + 2|F[f_t]({\mathcal Z}[f](z))-F[\widetilde{f}_t]({\mathcal Z}[f](z))|^2\\
    + 2|F[\widetilde{f}_t]({\mathcal Z}[f](z))-F[\widetilde{f}_t]({\mathcal Z}[\widetilde{f}](z))|^2.
\end{align*}
In the above inequality, we use Lemma \ref{lip} to estimate
\begin{align*}
    |F[f_t]({\mathcal Z}[f](z))-F[\widetilde{f}_t]({\mathcal Z}[f](z))|^2\leq c_1^2W_2^2(f_t,\widetilde{f}_t)
\end{align*}

\smallskip
\noindent
and
\begin{align*}
    |F[\widetilde{f}_t]({\mathcal Z}[f](z))-F[\widetilde{f}_t]({\mathcal Z}[\widetilde{f}](z))|^2\leq 2c_0^2|\mathcal{Z}_t[f](z)-\mathcal{Z}_t[\widetilde{f}](z)|^2,
\end{align*}

\smallskip
\noindent
yielding
\begin{align*}
     \frac{d}{dt}|\mathcal{Z}_t[f](z)-\mathcal{Z}_t[\widetilde{f}](z)|^2\leq (2+2c_0^2)|\mathcal{Z}_t[f](z)-\mathcal{Z}_t[\widetilde{f}](z)|^2 + c_1^2W_2^2(f_t,\widetilde{f}_t),
\end{align*}

\smallskip
\noindent
which, by Gronwall's inequality leads to \eqref{charstab-2}.
\end{proof}

With Lemma \ref{charstab} at hand we are finally ready to prove Theorem \ref{stab}.

\begin{proof}[Proof of Theorem \ref{stab}]

Let $f$ and ${\widetilde f}$ be any two solutions of \eqref{mezo1} with initial data $f_0$ and ${\widetilde f}_0$ respectively. Furthermore let $\gamma_0$ be an optimal transference plan between $f_0$ and ${\widetilde f}_0$. Then $\gamma = ({\mathcal Z}_t[f],{\mathcal Z}_t[\widetilde{f}])_\#\gamma_0$ is an admissible transference plan between $f_t = ({\mathcal Z}_t[f])_\# f_0$ and ${\widetilde f}_t = ({\mathcal Z}_t[\widetilde{f}])_\#\widetilde{f}_0$, cf. Proposition \ref{character}. Since $\gamma$ is an admissible plan, by the definition of $W_2$ metric and by the change of variables formula \eqref{change-of-var}, we have
\begin{align*}
    W_2^2(f_t,\widetilde{f}_t) \leq \int_{\R^{4d}}|z-z'|^2\, \dd\gamma(z,z') = \int_{\R^{4d}}|{\mathcal Z}_t[f](z_0)-{\mathcal Z}_t[\widetilde{f}](z_0')|^2\, \dd\gamma_0(z_0,z_0')=:{\mathcal L}.
\end{align*}    
Then we write using Lemma \ref{charstab}
\begin{align*}
{\mathcal L}
&\lesssim \int_{\R^{4d}} |{\mathcal Z}_t[f](z_0)-{\mathcal Z}_t[f](z_0')|^2\, \dd\gamma_0(z_0,z_0')\\
&\qquad+ \int_{\R^{4d}}|{\mathcal Z}_t[f](z_0')-{\mathcal Z}_t[\widetilde{f}](z_0')|^2\, \dd\gamma_0(z_0,z_0')\\
 &   \lesssim e^{2c_2 t}\int_{\R^{4d}}|z_0-z_0'|^2\, \dd\gamma_0(z_0,z_0') + c_1^2e^{2c_2t}\int_0^t W_2^2(f_t,\widetilde{f}_t)\, \dd t\\
  &  = e^{2c_2t}W_2^2(f_0,\widetilde{f}_0) + c_1^2e^{2c_2t}\int_0^t W_2^2(f_t,\widetilde{f}_t)\, \dd t.
\end{align*}

\smallskip
\noindent
Using again Gronwall lemma (integral variant) and simplifying (at the cost of worsening the constants) we end up with \eqref{stab-1}.

\end{proof}

\medskip
\subsection{Mean-field approximation and existence revisited}
In this section we will prove the main Theorem \ref{main}. The proof of uniqueness follows directly from the stability Theorem \ref{stab}. Existence will be achieved by a mean-field limit argument. To this end
consider any compactly supported initial datum $f_0\in {\mathcal M}$. By the law of large numbers there exists a sequence of empirical measures
\begin{align}\label{atomicinit}
\begin{aligned}
    f^N_0(x,v) := \frac{1}{N}\sum_{i=1}^N\delta_{x_{i0}^N}(x)\otimes\delta_{v_{i0}^N}(v),\\
    (x^N_{i0},v^N_{i,0})\in {\rm supp}(f_0) \mbox{ and } x^N_{i0}\neq x^N_{j0} \mbox{ for all } i,j\in\{1,...,N\},
\end{aligned}
\end{align}

\smallskip
\noindent
such that
\begin{align*}
    \lim_{N\to\infty} W_2(f^N_0,f_0) = 0.
\end{align*}

\smallskip
\noindent
Then points $(\vx^N_0, \vv^N_0)$ constitute  initial data for the ODE system \eqref{fuzzy}. Let $(\vx^N, \vv^N)$ be the corresponding solution, which exists by Theorem \ref{th:odes}. Then, by Proposition \ref{chain}, for all $N\in{\mathbb N}$ the family of empirical measures
\begin{align}\label{atomic}
    f^N_t(x,v) = \frac{1}{N}\sum_{i=1}^N\delta_{x_i^N(t)}(x)\otimes\delta_{v_i^N(t)}(v)
\end{align}
satisfies \eqref{mezo1} in the sense of distributions.

\medskip

\begin{proof}[Proof of Theorem \ref{main}]
With the preliminary results of Sections \ref{sec:char} and \ref{sec:stab} at hand we argue as follows. First we note that for our initial $f_0$ there exists an approximation by a sequence of empirical measures $f^N_0$ of the form \eqref{atomicinit}. These serve as initial data $(\vx^N_0,\vv_0^N)$ 
    for the particle system \eqref{fuzzy} as explained at the beginning of this section. The initial data are bounded by
    \begin{align*}
        \sup_{i\in\{1,...,N\}}|x_{i0}| \leq {\mathcal D}_x,\qquad \sup_{i\in\{1,...,N\}}|v_{i0}| \leq {\mathcal D}_v. 
    \end{align*}

\smallskip
\noindent
Furthermore, by Theorem \ref{th:odes} for each $N\in{\mathbb N}$ the particle system admits a unique classical solution $(\vx^N,\vv^N)$ in $[0,T]$ which is bounded by
    \begin{align*}
        \sup_{i\in\{1,...,N\}}|x_i|_\infty \leq {\mathcal D}_x + t{\mathcal D}_v,\qquad \sup_{i\in\{1,...,N\}}|v_i|_\infty \leq {\mathcal D}_v.
    \end{align*}
    
\smallskip
\noindent
The above solutions $(\vx^N,\vv^N)$ correspond through the formula \eqref{atomic} to unique measure-valued solutions $f^N$ of \eqref{mezo1} in the sense of Definition \ref{weaksol}. Finally, stability estimate in Proposition \ref{stab} and the fact that $f_0^N\to f_0$ in $({\mathcal P}_2(\R^{2d}),W_2)$ ensure that $f^N$ converge to a unique limit, denoted by $f$, in $C(0,T; ({\mathcal P}_2(\R^{2d}),W_2))$.

\medskip
In light of the above, and since uniqueness follows directly from the stability estimate, it remains to prove that $f$ is compactly supported satisfying bounds in item (ii) of Definition \ref{weaksol} and that equation \eqref{weaksol-1} is satisfied. We begin with the proof of item (ii) noting that for all $t\in[0,T]$,
\begin{align*}
    {\rm supp}f^N_t\subset B_{\frac{{\mathcal D}_x+t{\mathcal D}_v}{2}}\times B_{\frac{{\mathcal D}_v}{2}},
\end{align*}
where $B_R$ denotes the closed ball centered at $0$ with radius $R$.
The above inclusion is equivalent to saying that
\begin{align*}
    f^N_t(B_{\frac{{\mathcal D}_x+t{\mathcal D}_v}{2}}\times B_{\frac{{\mathcal D}_v}{2}}) = 1
\end{align*}
and, since for all $t\in[0,T]$ the measure $f^N_t\to f_t$ narrowly, all the involved balls are closed, Portmanteau theorem ensures that
\begin{align*}
    f_t(B_{\frac{{\mathcal D}_x+t{\mathcal D}_v}{2}}\times B_{\frac{{\mathcal D}_v}{2}}) \geq \limsup_{N\to\infty} f^N_t( B_{\frac{{\mathcal D}_x+t{\mathcal D}_v}{2}}\times B_{\frac{{\mathcal D}_v}{2}}) = 1
\end{align*}
for all $t\in[0,T]$. Therefore for all $t\in[0,T]$, we have
\begin{align*}
    {\rm supp}f_t\subset B_{\frac{{\mathcal D}_x+t{\mathcal D}_v}{2}}\times B_{\frac{{\mathcal D}_v}{2}}
\end{align*}
and the bound on the support of $f$ required in item (ii) of Definition \ref{weaksol} is proven.

\medskip
Finally, we aim to show that $f$ satisfies the weak formulation \eqref{weaksol-1}. Taking an arbitrary test function $\psi\in C^1_b([0,T]\times \R^{2d})$, compactly supported in $[0,T)$ we know that for each $N\in{\mathbb N}$ the measure-valued function $f^N$ satisfies \eqref{weaksol-1} with initial datum $f_0^N$. Thus, it suffices to prove the following convergences:

\begin{align*}
    \int_0^T\intr \partial_t\psi + v\cdot \nabla_x\psi\, \dd f^N_t\,\dd t &\to \int_0^T\intr \partial_t\psi + v\cdot \nabla_x\psi\, \dd f_t\,\dd t,\\
    \int_0^T\intr \nabla_v\psi\cdot F[f^N_t] \, \dd f^N_t\,\dd t &\to \int_0^T\intr \nabla_v\psi\cdot F[f_t] \, \dd f_t\,\dd t,\\
    \intr \psi(0,x,v) \,\dd f_0^N &\to \intr \psi(0,x,v) \,\dd f_0.
\end{align*}

\smallskip
\noindent
The first and the third are trivial, since the integrands are bounded and continuous and $f^N_t\to f_t$ in $W_2$ for all $t\in[0,T]$. As for the middle term involving the force $F[f_t]$, we use a classical weak-strong convergence argument. First we write
\begin{align*}
     \left|\int_0^T\intr \nabla_v\psi\cdot F[f^N_t] \, \dd f^N_t\,\dd t - \int_0^T\intr \nabla_v\psi\cdot F[f_t] \, \dd f_t\,\dd t\right|\leq\\
     \left|\int_0^T\intr \nabla_v\psi\cdot (F[f^N_t] -F[f_t])\, \dd f^N_t\,\dd t\right| + \left|\int_0^T\intr \nabla_v\psi\cdot F[f_t] \, (\dd f^N_t - \dd f_t)\,\dd t\right|.
\end{align*}

\smallskip
\noindent
By Lemma \ref{lip} we have
\begin{align*}
    \left|\int_0^T\intr \nabla_v\psi\cdot (F[f^N_t] -F[f_t])\, \dd f^N_t\,\dd t\right|\leq T |\nabla_v\psi|_\infty |F[f^N_t]-F[f_t]|_\infty \\
    \leq c_1 T |\nabla_v\psi|_\infty W_2(f^N_t,f_t) \longrightarrow 0.
\end{align*}

\smallskip
\noindent
Finally suppose that $F[f_t]$ is continuous and bounded as a function of $(t,x,v)$. Then by the characterisation of $W_2$ convergence that includes the narrow convergence tested with bounded continuous functions, we have
\begin{align*}
    \left|\int_0^T\intr \nabla_v\psi\cdot F[f_t] \, (\dd f^N_t - \dd f_t)\,\dd t\right| \to 0.
\end{align*}

\smallskip
\noindent
Altogether $f$ satisfies the weak formulation \eqref{weaksol-1} and the proof is finished.

\medskip
Thus it remains to ensure the continuity of $(t,x,v)\mapsto F[f_t](x,v)$. To this end let $(t_n,x_n,v_n)\to(t,x,v)$ and observe that
\begin{align*}
    |F[f_{t_n}](x_n,v_n)-F[f_{t}](x,v)| &\leq |F[f_{t_n}](x_n,v_n)-F[f_{t}](x_n,v_n)| \\
    &\qquad + |F[f_{t}](x_n,v_n)-F[f_{t}](x,v)|\\
    &\leq c_1 W_2(f_{t_n},f_t) + c_0(|x_n-x| + |v_n-v|)\to 0,
\end{align*}
where we used Lemma \ref{lip} and the fact that $f_{t_n}\to f_t$ in $W_2$.

\end{proof}

\section{Concluding remarks}\label{sec:conc}

To summarize the idea of the paper we need to return to system \eqref{qclosed}. 
Looking formally at the velocity equation, we see a system, which is piece-wise constant in time. The communication is not symmetric, and in fact it is often expected to occur only in one direction (particle being influenced by another, which it does not influence back). Our modification of the system makes it well posed from the mathematical viewpoint; the change of the communications becomes smooth. 
%but of course, the change of the set of neighbors ${\mathcal N}_i$ as a set on numbers can not be smooth.

A very natural question concerns the asymptotics. It is well known that with $q>(N-1)/2$ there is a permanent communication between all particles, which results in global flocking -- all elements tend to the same speed at $t=+\infty$. However for smaller $q$
there is no such phenomenon. In fact a more natural question is if a consistent-in-time segmentation of the ensemble occurs or not. We shall call such phenomenon of large-time emmergence of isolated clusters {\it clustering}.

Note that for the fuzzy system, the question is virtually the same. For the large-time behavior, parameter $\sigma$
does not play any role. Let us formulate the question as the following conjecture.

\smallskip

{\sc Conjecture:}
{\it The set of neighbors ${\mathcal N}_i(t)$ to system (\ref{qclosed}) stabilizes in time into separate constant subsets, in particular we observe the velocity stabilisation, i.e.
\begin{equation}\label{conj}
    \mbox{ for each } i\in \{1,...,N\} \qquad
    \lim_{t\to \infty} \dot v_i(t)=0.
\end{equation}}

\smallskip 

In general we expect a negative answer to the above conjecture. On the torus one can find an example (essentially Example \ref{ex1} on the 2D torus). On the other hand in the case of the whole Euclidean space, our simulations indicate that the conjecture holds.
%\jp{nie mam pojecia o jakie glosowanie chodzi...} also the mono-dimensional case  votes for (\ref{conj}). 

Finally we take another look at (\ref{qclosed}) with non-constant communication weight $\psi$. If the communication weight $\psi$ vanishes at infinity, we may lose the interesting combinatorical character of the problem. Consider
\begin{equation}\label{phi-sys}
\dot x_i = v_i, \qquad \dot v_i=-\frac{1}{q} \sum_{j \in \mathcal{N}_i} \psi(|x_i-x_j|)(v_i-v_j),
\end{equation}
where $\psi(\cdot)$ is a sufficiently smooth scalar function such that
$\psi(s) \to 0$ as $s\to \infty$. Note that such system is still ill-posed. The analysis of system (\ref{phi-sys}) seems to be the first step in proving our conjecture. However to find the suitable approach there is a need of a new viewpoint for such unsymmetrical models.

\appendix

\section{Atomic solutions}

The appendix consists of the following proposition, which states that the solutions of the fuzzy particle system \eqref{fuzzy}, treated as the empirical measures \eqref{atomic} constitute solutions to the kinetic equation \eqref{mezo1}.

\begin{prop}\label{chain}
Let $(\vx^N, \vv^N)$ be a solution to \eqref{fuzzy}. Then $f^N$ given in \eqref{atomic} satisfies \eqref{mezo1} in the sense of Definition \ref{weaksol}.
\end{prop}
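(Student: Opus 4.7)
The plan is to verify directly that the empirical measure $f^N_t$ in \eqref{atomic} satisfies all three items of Definition \ref{weaksol}, relying on the ODE system \eqref{fuzzy} and the bounds from Proposition \ref{th:odes}. The heart of the argument is to match the kinetic force $F[f^N_t]$ evaluated at an atom $(x_i(t),v_i(t))$ with the right-hand side of \eqref{fuzzy}, after which the weak formulation reduces to the fundamental theorem of calculus applied to each characteristic.

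First, I would carry out the computation that identifies $F[f^N_t](x_i(t),v_i(t))$ with the ODE force. Plugging the atomic measure into the definition of $R$ in \eqref{defr} gives
\begin{align*}
R[f^N_t](x) = \inf\Bigg\{r>0: \frac{1}{N}\sum_{j=1}^N \int_{B(x,r)}\phi(z-x_j(t))\, \dd z \geq \eta\Bigg\},
\end{align*}
which, after multiplying both sides by $N$ and recalling $q=\eta N$, coincides with the radius $R(x_i(t))$ defined immediately below \eqref{fuzzy} (the inf in \eqref{defr} is attained as an equality by smoothness of $\phi$, cf.\ the discussion after \eqref{fuzzy}). Consequently
\begin{align*}
F[f^N_t](x_i(t),v_i(t)) = \frac{1}{\eta N}\sum_{j=1}^N \int_{B(x_i(t),R(x_i(t)))}\phi(z-x_j(t))(v_i(t)-v_j(t))\, \dd z,
\end{align*}
which is precisely $-\dot v_i(t)$ thanks to \eqref{fuzzy}. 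Since also $\dot x_i(t)=v_i(t)$, the chain rule yields for any smooth $\psi$
\begin{align*}
\frac{d}{dt}\psi(t,x_i(t),v_i(t)) = \partial_t\psi + v_i(t)\cdot\nabla_x\psi + F[f^N_t](x_i(t),v_i(t))\cdot\nabla_v\psi,
\end{align*}
with the right-hand side evaluated at $(t,x_i(t),v_i(t))$.

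Next, I would test the candidate weak formulation \eqref{weaksol-1} against an arbitrary $\psi\in C^1_b([0,T]\times\R^{2d})$ compactly supported in $[0,T)$. By the definition of the empirical measure,
\begin{align*}
\int_0^T\intr\Big(\partial_t\psi + v\cdot\nabla_x\psi + \nabla_v\psi\cdot F[f^N_t]\Big)\dd f^N_t\,\dd t
 = \frac{1}{N}\sum_{i=1}^N\int_0^T \frac{d}{dt}\psi(t,x_i(t),v_i(t))\,\dd t,
\end{align*}
and the fundamental theorem of calculus together with $\psi(T,\cdot,\cdot)\equiv 0$ gives
\begin{align*}
\frac{1}{N}\sum_{i=1}^N\int_0^T \frac{d}{dt}\psi(t,x_i(t),v_i(t))\,\dd t = -\frac{1}{N}\sum_{i=1}^N \psi(0,x_i(0),v_i(0)) = -\intr\psi(0,x,v)\,\dd f^N_0,
\end{align*}
which is \eqref{weaksol-1}.

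It remains to verify items (i) and (ii) of Definition \ref{weaksol}. For (ii), the estimates \eqref{est-odes} and \eqref{vel-conv} in Proposition \ref{th:odes} immediately imply ${\rm diam}_x({\rm supp} f^N_t)\leq \mathcal{D}_x+t\mathcal{D}_v$ and ${\rm diam}_v({\rm supp} f^N_t)\leq \mathcal{D}_v$ provided the initial atoms lie in ${\rm supp}f_0$. For (i), continuity of the map $t\mapsto f^N_t$ in $(\mathcal{P}_2(\R^{2d}),W_2)$ follows because the coupling $\gamma_{s,t}=\frac{1}{N}\sum_i \delta_{(x_i(s),v_i(s))}\otimes\delta_{(x_i(t),v_i(t))}$ is an admissible transference plan, so $W_2^2(f^N_s,f^N_t)\leq \frac{1}{N}\sum_i(|x_i(s)-x_i(t)|^2+|v_i(s)-v_i(t)|^2)\to 0$ as $s\to t$ by the $C^1$ regularity of the trajectories obtained in Proposition \ref{th:odes}. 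The only place requiring minor care is the identification of the radius, since \eqref{defr} uses an infimum while the ODE uses an equality; this is harmless because $r\mapsto\int_{B(x,r)}(\phi*m_0 f^N_t)(z)\,\dd z$ is continuous and strictly increasing on the relevant range, so the infimum is realized as the unique root.
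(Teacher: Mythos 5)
Your proof follows essentially the same route as the paper's: identify $F[f^N_t]$ at the atoms with the right-hand side of \eqref{fuzzy} (via $q=\eta N$ and the matching of the two definitions of the radius $R$), apply the chain rule along each trajectory, and integrate in time using that $\psi$ vanishes at $t=T$; you additionally spell out items (i) and (ii) of Definition \ref{weaksol}, which the paper's appendix proof leaves implicit, and your treatment of the infimum versus equality in \eqref{defr} is a welcome extra. The one wrinkle is a sign: you correctly compute $F[f^N_t](x_i,v_i)=-\dot v_i$ under the stated conventions, yet then write the chain rule with $F$ where $\dot v_i$ belongs; this mismatch is inherited from the paper itself (the $(v-w)$ in \eqref{mezo1} versus the explicit minus sign in \eqref{fuzzy}) and is harmless for all the estimates, but for internal consistency you should either adopt $F[f_t](x,v)=\frac{1}{\eta}\iint\phi(z-y)(w-v)\,\dd z\,\dd f_t$ or keep $\dot v_i\cdot\nabla_v\psi$ in the chain rule and note that \eqref{weaksol-1} is stated with $+F$.
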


\medskip

\begin{proof}
Given $\psi\in C_b^1([0,T)\times \R^{2d})$, by the chain rule, we have
\begin{equation*}
\begin{split}
    \frac{\dd}{\dd t}\intr \psi\, \dd f^N_t &= \frac{1}{N}\sum_{i=1}^N\frac{\dd}{\dd t}\psi(t,x_i^N(t),v_i^N(t))\\
    &= \frac{1}{N}\sum_{i=1}^N (\partial_t\psi)(t,x_i^N(t),v_i^N(t)) + \frac{1}{N}\sum_{i=1}^N v_i^N(t)\cdot\nabla \psi(t,x_i^N(t),v_i^N(t))\\
    &\qquad + \frac{1}{N}\sum_{i=1}^N\nabla_v\psi(t,x_i^N(t),v_i^N(t)) \frac{1}{\eta N}\sum_{j=1}^N\Bigg(v_i^N-\int_{B(x_i,R(x_i(t))}\phi(z-x_j)\, \dd z\ v_j^N\Bigg).
\end{split}
\end{equation*}

\smallskip
\noindent
To finish the proof we integrate the above equation over $[0,T)$ and observe that
\begin{align*}
        \intr \partial_t\psi\, \dd f^N_t &= \frac{1}{N}\sum_{i=1}^N (\partial_t\psi)(t,x_i^N(t),v_i^N(t)),\\
    \intr v\cdot\nabla \psi\, \dd f^N_t &= \frac{1}{N}\sum_{i=1}^N v_i^N(t)\cdot\nabla \psi(t,x_i^N(t),v_i^N(t)),\\
    \intr \nabla_v\psi F(f_N)\, \dd f^N_t &= \frac{1}{N^2}\frac{1}{\eta}\sum_{i,j=1}^N\nabla_v\psi(t,x^N_i(t),v_i^N(t))\Bigg(v_i^N - \int_{B(x_i^N,R(t,x_i^N))}\phi(z-x_j^N)\, \dd z\ v_j^N\Bigg).
\end{align*}
    
\end{proof}

\bibliographystyle{amsplain} %Options: amsplain, elsarticle-harv, elsart-num
\bibliography{main}

\providecommand{\bysame}{\leavevmode\hbox to3em{\hrulefill}\thinspace}
\providecommand{\MR}{\relax\ifhmode\unskip\space\fi MR }
% \MRhref is called by the amsart/book/proc definition of \MR.
\providecommand{\MRhref}[2]{%
  \href{http://www.ams.org/mathscinet-getitem?mr=#1}{#2}
}
\providecommand{\href}[2]{#2}
\begin{thebibliography}{10}

\bibitem{AKMO-20}
R.~Almeida, R.~Kamocki, A.~B. Malinowska, and T.~Odzijewicz, \emph{On the
  existence of optimal consensus control for the fractional {C}ucker-{S}male
  model}, Arch. Control Sci. \textbf{30(66)} (2020), no.~4, 625--651.

\bibitem{AGS-08}
L.~Ambrosio, N.~Gigli, and G.~Savar\'e, \emph{Gradient flows in metric spaces
  and in the space of probability measures}, Birkh{\"a}user, Basel, 2008.

\bibitem{BBCK-18}
R.~Bailo, M.~Bongini, J.~A. Carrillo, and D.~Kalise, \emph{Optimal consensus
  control of the {C}ucker-{S}male model}, IFAC-PapersOnLine \textbf{51} (2018),
  no.~13, 1--6, 2nd IFAC Conference on Modelling, Identification and Control of
  Nonlinear Systems MICNON 2018.

\bibitem{BH-17}
N.~Bellomo and S.-Y. Ha, \emph{A quest toward a mathematical theory of the
  dynamics of swarms}, Math. Models Methods Appl. Sci. \textbf{27} (2017),
  no.~4, 745--770.

\bibitem{BCC-11}
F.~Bolley, J.~A. Ca\~{n}izo, and J.~A. Carrillo, \emph{Stochastic mean-field
  limit: non-{L}ipschitz forces and swarming}, Math. Models Methods Appl. Sci.
  \textbf{21} (2011), no.~11, 2179--2210.

\bibitem{CCMP-17}
J.~A. Carrillo, Y.-P. Choi, P.~B. Mucha, and J.~Peszek, \emph{Sharp conditions
  to avoid collisions in singular {C}ucker-{S}male interactions}, Nonlinear
  Anal. Real World Appl. \textbf{37} (2017), 317--328.

\bibitem{CHL-17}
Y.-P. Choi, S.-Y. Ha, and Z.~Li, \emph{Emergent dynamics of the
  {C}ucker-{S}male flocking model and its variants}, Active particles. {V}ol.
  1. {A}dvances in theory, models, and applications, Model. Simul. Sci. Eng.
  Technol., Birkh{\"{a}}user, Cham, 2017, pp.~299--331.

\bibitem{CKPP-19}
Y.-P. Choi, D.~Kalise, J.~Peszek, and A.~A. Peters, \emph{A collisionless
  singular {C}ucker-{S}male model with decentralized formation control}, SIAM
  J. Appl. Dyn. Syst. \textbf{18} (2019), no.~4, 1954--1981.

\bibitem{CZ-21}
Y.-P. Choi and X.~Zhang, \emph{One dimensional singular {C}ucker-{S}male model:
  {U}niform-in-time mean-field limit and {c}ontractivity}, J. Differential
  Equations \textbf{287} (2021), 428--459.

\bibitem{CS-07}
F.~Cucker and S.~Smale, \emph{Emergent behavior in flocks}, IEEE Trans.
  Automat. Control \textbf{52} (2007), no.~5, 852--862.

\bibitem{CD-16}
Felipe Cucker and Jiu-Gang Dong, \emph{On flocks influenced by closest
  neighbors}, Math. Models Methods Appl. Sci. \textbf{26} (2016), no.~14,
  2685--2708. \MR{3593713}

\bibitem{DHJK-20}
Jiu-Gang Dong, Seung-Yeal Ha, Jinwook Jung, and Doheon Kim, \emph{On the
  stochastic flocking of the {C}ucker-{S}male flock with randomly switching
  topologies}, SIAM J. Control Optim. \textbf{58} (2020), no.~4, 2332--2353.
  \MR{4134026}

\bibitem{DHJK-202}
\bysame, \emph{On the stochastic flocking of the {C}ucker-{S}male flock with
  randomly switching topologies}, SIAM J. Control Optim. \textbf{58} (2020),
  no.~4, 2332--2353. \MR{4134026}

\bibitem{E-12}
D.A. Edwards, \emph{On the kantorovich–rubinstein theorem}, Expositiones
  Mathematicae \textbf{29} (2011), no.~4, 387--398.

\bibitem{HHK-10}
S.-Y. Ha, T.~Ha, and J.-H. Kim, \emph{Asymptotic dynamics for the
  {C}ucker-{S}male-type model with the {R}ayleigh friction}, J. Phys. A
  \textbf{43} (2010), no.~31, 315201.

\bibitem{HL-09}
S.-Y. Ha and J.-G. Liu, \emph{A simple proof of the {C}ucker-{S}male flocking
  dynamics and mean-field limit}, Commun. Math. Sci. \textbf{7} (2009), no.~2,
  297--325.

\bibitem{HT-08}
Seung-Yeal Ha and Eitan Tadmor, \emph{From particle to kinetic and hydrodynamic
  descriptions of flocking}, Kinet. Relat. Models \textbf{1} (2008), no.~3,
  415--435. \MR{2425606}

\bibitem{J-18}
C.~Jin, \emph{Flocking of the {M}otsch-{T}admor model with a cut-off
  interaction function}, J. Stat. Phys. \textbf{171} (2018), no.~2, 345--360.

\bibitem{M-18}
I.~Markou, \emph{Collision-avoiding in the singular {C}ucker-{S}male model with
  nonlinear velocity couplings}, Discrete Contin. Dyn. Syst. Ser. A \textbf{38}
  (2018), no.~10, 5245--5260.

\bibitem{MMPZ-19}
P.~Minakowski, P.~B. Mucha, J.~Peszek, and E.~Zatorska, \emph{Singular
  {C}ucker--{S}male {D}ynamics}, Active Particles, Volume 2 (N.~Bellomo,
  P.~Degond, and E.~Tadmor, eds.), Modeling and Simulation in Science,
  Engineering and Technology, Birkh{\"a}user, Cham, 2019, pp.~201--243.

\bibitem{MMP-20}
Piotr Minakowski, Piotr~B. Mucha, and Jan Peszek, \emph{Density-induced
  consensus protocol}, Math. Models Methods Appl. Sci. \textbf{30} (2020),
  no.~12, 2389--2415. \MR{4179192}

\bibitem{MP-22}
Piotr Minakowski and Jan Peszek, \emph{Data clustering as an emergent consensus
  of autonomous agents}, 2022.

\bibitem{MPT-19}
J.~Morales, J.~Peszek, and E.~Tadmor, \emph{Flocking with short-range
  interactions}, J. Stat. Phys. \textbf{176} (2019), no.~2, 382--397.

\bibitem{MTX}
Sebastien Motsch and Eitan Tadmor, \emph{A new model for self-organized
  dynamics and its flocking behavior}, J. Stat. Phys. \textbf{144} (2011),
  no.~5, 923--947. \MR{2836613}

\bibitem{MP-18}
P.~B. Mucha and J.~Peszek, \emph{The {C}ucker-{S}male equation: singular
  communication weight, measure-valued solutions and weak-atomic uniqueness},
  Arch. Ration. Mech. Anal. \textbf{227} (2018), no.~1, 273--308.

\bibitem{OLF1}
Reza Olfati-Saber, \emph{Flocking for multi-agent dynamic systems: algorithms
  and theory}, IEEE Trans. Automat. Control \textbf{51} (2006), no.~3,
  401--420. \MR{2205679}

\bibitem{OLF2}
Reza Olfati-Saber and Richard~M. Murray, \emph{Consensus problems in networks
  of agents with switching topology and time-delays}, IEEE Trans. Automat.
  Control \textbf{49} (2004), no.~9, 1520--1533. \MR{2086916}

\bibitem{O-01}
F.~Otto, \emph{The geometry of dissipative evolution equations: the porous
  medium equation}, Comm. Part. Differ. Equat. \textbf{26} (2001), no.~1--2,
  101--174.

\bibitem{PKH-10}
J.~Park, H.~J. Kim, and S.-Y. Ha, \emph{Cucker-{S}male flocking with
  inter-particle bonding forces}, IEEE Trans. Automat. Control \textbf{55}
  (2010), no.~11, 2617--2623.

\bibitem{PGE-09}
L.~Perea, G.~G\'~omez, and P.~Elosegui, \emph{Extension of the {C}ucker-{S}male
  {C}ontrol {L}aw to {S}pace {F}light {F}ormations}, J. Guid. Control Dyn.
  \textbf{32} (2009), no.~2, 527--537.

\bibitem{P-14}
J.~Peszek, \emph{Existence of piecewise weak solutions of a discrete
  {C}ucker-{S}male's flocking model with a singular communication weight}, J.
  Differ. Equations \textbf{257} (2014), no.~8, 2900--2925.

\bibitem{P-15}
\bysame, \emph{Discrete {C}ucker--{S}male {F}locking {M}odel with a {W}eakly
  {S}ingular {W}eight}, SIAM J. Math. Anal. \textbf{47} (2015), no.~5,
  3671--3686.

\bibitem{PP-22-1-arxiv}
J.~Peszek and D.~Poyato, \emph{Heterogeneous gradient flows in the topology of
  fibered optimal transport}, 2022, arXiv:2203.08104.

\bibitem{PP-22-arxiv}
\bysame, \emph{Measure solutions to a kinetic {C}ucker-{S}male model with
  singular and matrix-valued communication}, 2022, arXiv:2207.14764.

\bibitem{ST-top-20}
Roman Shvydkoy and Eitan Tadmor, \emph{Topologically based fractional diffusion
  and emergent dynamics with short-range interactions}, SIAM J. Math. Anal.
  \textbf{52} (2020), no.~6, 5792--5839. \MR{4176893}

\bibitem{YYC-20}
X.~Yin, D.~Yue, and Z.~Chen, \emph{Asymptotic behavior and collision avoidance
  in the {C}ucker-{S}male model}, IEEE Trans. Automat. Control \textbf{65}
  (2020), no.~7, 3112--3119.

\end{thebibliography}

\end{document}